\newtheorem{theorem}{Theorem}
\newtheorem{lemma}{Lemma}
\newtheorem{conjecture}{Conjecture}
\newtheorem{proposition}{Proposition}
\newtheorem{remark}{Remark}
\newcommand{\R}{\mathbb{R}}
\newcommand{\C}{\mathbb{C}}
\newcommand{\N}{\mathbb{N}}
\begin{document}

\title{\bf Orbital stability of Dirac solitons}

\author{Dmitry E. Pelinovsky and Yusuke Shimabukuro \\
{\small \it Department of Mathematics and Statistics, McMaster
University, Hamilton, Ontario, Canada, L8S 4K1 } }

\date{\today}
\maketitle

\begin{abstract}
We prove $H^1$ orbital stability of Dirac solitons in the integrable massive
Thirring model by working with an additional conserved quantity which complements
Hamiltonian, momentum and charge functionals of the general nonlinear Dirac equations.
We also derive a global bound on the $H^1$ norm of the $L^2$-small solutions of the
massive Thirring model.
\end{abstract}


\section{Introduction}

Nonlinear Dirac equations are considered as a relativistic version of the nonlinear
Schr\"{o}dinger (NLS) equation. Compared to the NLS equation, proofs of global existence and orbital stability of
solitary waves are complicated by the fact that the quadratic part of the Hamiltonian of the nonlinear Dirac equations is
not bounded from neither above nor below. Similar situation occurs in a gap between two bands of continuous
spectrum in the Schr\"{o}dinger equations with a periodic potential, for which the nonlinear Dirac equations
are justified rigorously as an asymptotic  model (Chapter 2.2 in \cite{Pel-book}).

Because orbital stability of solitary waves is not achieved by the standard energy arguments \cite{GSS},
researchers have studied spectral and asymptotic stability of solitary waves in many details.
Spectral properties of linearized Dirac operators were studied by a combination of analytical
methods and numerical approximations \cite{GD,ChPel-cme,Comech,COM1,Comech-alone1,Comech-alone2,COM2}.
Asymptotic stability of small solitary waves in the general nonlinear Dirac equations was
studied with dispersive estimates both in the space of one \cite{PS,Komech,Kopylova} and three \cite{Boussaid,Boussaid2,BC12}
dimensions. Global existence and scattering to zero for small initial data were obtained
again in one \cite{HN1,HN2} and three \cite{Machihara,Machihara2} dimensions.

When nonlinear Dirac equations are considered in one spatial dimension, a particular
attention is drawn to the massive Thirring model (MTM) \cite{Thirring}, which is known
to be integrable with the inverse scattering transform method \cite{KN,KM}.
In laboratory coordinates, this model takes the following form:
\begin{equation}
\label{MTM}
\left\{ \begin{array}{cc}
i (u_t + u_x) + v = 2 |v|^2 u, \\
i (v_t - v_x) + u = 2 |u|^2 v,
\end{array} \right.
\end{equation}
where $(u,v)(x,t) : \R \times \R_+ \to \C^2$.

Selberg and Tesfahun \cite{ST} proved local well-posedness of the MTM system in $H^s(\R)$ for $s > 0$
and global well-posedness in $H^s(\R)$ for $s > \frac{1}{2}$.
Machihara {\em et al.} \cite{Machihara3} proved for similar nonlinear Dirac equations with
quadratic nonlinear terms that local well-posedness holds in $H^s(\R)$ for $s > -\frac{1}{2}$
and that the Cauchy problem is ill-posed in $H^{-1/2}(\R)$.
Candy \cite{Candy} proved local and global
well-posedness of the MTM system in $L^2(\R)$. These results do not rely on
the inverse scattering transform for the MTM system.

On the other hand, by using the inverse scattering transform, spectral stability
of the MTM solitons was established by Kaup \& Lakoba \cite{KL1}. The stationary MTM solitons
are known in the exact analytical form:
\begin{equation}
\label{MTM-solitons}
\left\{ \begin{array}{c} u = U_{\omega}(x+x_0) e^{i \omega t + i\alpha}, \\
v = \bar{U}_{\omega}(x+x_0) e^{i \omega t + i \alpha}, \end{array} \right.
\end{equation}
with
\begin{equation}
\label{MTM-form}
U_{\omega}(x) = \frac{\sqrt{1 - \omega^2}}{\sqrt{1+\omega} \cosh\left(\sqrt{1 - \omega^2} x\right) +
i \sqrt{1-\omega} \sinh\left(\sqrt{1 - \omega^2} x\right)},
\end{equation}
where $\alpha$ and $x_0$ are real parameters related to the gauge and space translations, whereas
$\omega \in (-1,1)$ is a parameter that determines where the MTM solitons are placed in the gap
between two branches of the continuous spectrum of the linearized MTM system (\ref{MTM}).
Perturbations of the MTM system and the loss of spectral stability of solitary waves was consequently considered using
the spectral representations \cite{BarPel,KL2} and the Evans function \cite{KapSand}.
No results on the orbital or asymptotic stability of the MTM solitons (\ref{MTM-solitons})
in the time evolution of the MTM system (\ref{MTM}) have been obtained so far.

The idea for our work relies on the existence of an infinite set of conserved quantities
in the MTM system, which has been known for quite some time \cite{KM}. The three
standard conserved quantities for the nonlinear Dirac equations
are related to the translational invariance of the system with respect to gauge, space, and time
transformations. For the MTM system (\ref{MTM}), these three conserved quantities are
referred to as the charge $Q$, momentum $P$, and Hamiltonian $H$ functionals:
\begin{equation}
\label{quantity-Q} Q = \int_{\mathbb{R}} \left( |u|^2+|v|^2
\right) dx,
\end{equation}
\begin{equation}
\label{quantity-P} P = \frac{i}{2} \int_{\mathbb{R}} \left(
u \bar{u}_x - u_x \bar{u} + v \bar{v}_x - v_x \bar{v} \right) dx,
\end{equation}
and
\begin{equation}
\label{quantity-H} H = \frac{i}{2} \int_{\mathbb{R}} \left(
u \bar{u}_x - u_x \bar{u} - v \bar{v}_x + v_x \bar{v} \right) dx +
\int_{\R} \left( -v \bar{u} - u \bar{v} + 2|u|^2 |v|^2 \right) dx.
\end{equation}
The charge $Q$ is useful to establish global bound on the $L^2$ norm of solutions,
as soon as the local existence in $L^2(\R)$ is proven \cite{Candy}. The other two functionals
$P$ and $H$ are defined in $H^{1/2}(\R)$ but they are not so useful because of the fact that
the quadratic part of the Hamiltonian $H$ is not bounded from neither above nor below.
Therefore, although the MTM solitons (\ref{MTM-solitons}) are critical points of the functional
$H + \omega Q + c P$, where $\omega \in (-1,1)$ is a parameter of the MTM solitons (\ref{MTM-form})
and $c = 0$ for stationary solutions, this functional cannot serve as a Lyapunov functional for orbital stability
or instability of the MTM solitons.

Nevertheless, we find another conserved quantity of the MTM system (\ref{MTM}),
thanks to the integrability via the inverse scattering transform method:
\begin{eqnarray}
\nonumber
R & = & \int_{\mathbb{R}} \left[ |u_{x}|^2+|v_x|^2
- \frac{i}{2}(u_x\overline{u}-\overline{u}_xu)(|u|^2+2|v|^2) +
\frac{i}{2}(v_x\overline{v}-\overline{v}_xv)(2|u|^2+|v|^2) \right. \\
\label{quantity-R}
& \phantom{t} & \left. -(u\overline{v}+\overline{u}v)(|u|^2+|v|^2)+2|u|^2|v|^2(|u|^2+|v|^2)\right]dx.
\end{eqnarray}
Derivation of the conserved quantity $R$ is reviewed in Appendix A.
The conserved quantity $R$ is well defined in $H^1(\R)$ and we shall use it
to prove orbital stability of the MTM solitons in $H^1(\R)$. The main result of this article is
the following theorem.

\begin{theorem}
There is $\omega_0 \in (0,1]$ such that
for any fixed $\omega \in (-\omega_0,\omega_0)$, the MTM soliton $(u,v) = (U_{\omega},\bar{U}_{\omega})$
is a local non-degenerate minimizer of $R$ in $H^1(\R,\C^2)$ under the constraints of fixed values of $Q$ and $P$.
Therefore, the MTM soliton is orbitally stable in $H^1(\R,\C^2)$ with respect to the time evolution of the MTM system (\ref{MTM}).
\label{theorem-main}
\end{theorem}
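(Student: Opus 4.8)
The plan is to treat $R$ as a Lyapunov functional in the spirit of the Grillakis--Shatah--Strauss method, but with the conserved quantity $R$ (whose quadratic part controls the full $\dot H^1$ seminorm) replacing the sign-indefinite Hamiltonian $H$. First I would verify that the MTM soliton is a critical point of the augmented functional $\Lambda := R + c_1 Q + c_2 P$ for suitable real Lagrange multipliers $c_1 = c_1(\omega)$ and $c_2 = c_2(\omega)$. Since $U_\omega$ solves the stationary soliton ODE obtained from $H + \omega Q$, and since $R$ belongs to the same integrable hierarchy, the Euler--Lagrange identity $R'(U_\omega,\bar U_\omega) + c_1 Q'(U_\omega,\bar U_\omega) + c_2 P'(U_\omega,\bar U_\omega) = 0$ should hold after a direct (if lengthy) computation, with the reflection symmetry of the stationary profile expected to force $c_2 = 0$. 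I would record the explicit values of $c_1, c_2$ for use below.

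Next I would expand $\Lambda$ around the soliton, writing the perturbation as $\vec w = (p,q) \in H^1(\R,\C^2)$ and obtaining $\Lambda(U_\omega + p, \bar U_\omega + q) = \Lambda(U_\omega,\bar U_\omega) + \tfrac{1}{2}\langle \mathcal L \vec w, \vec w\rangle + o(\|\vec w\|_{H^1}^2)$, where the self-adjoint Hessian $\mathcal L$ is a matrix second-order operator whose leading part is $-\partial_x^2$ acting diagonally, coming precisely from the term $\int(|u_x|^2+|v_x|^2)\,dx$ in $R$. Because of this positive leading symbol together with the mass gap, the essential spectrum of $\mathcal L$ is contained in $[m,\infty)$ for some $m=m(\omega)>0$, so orbital stability reduces to a finite-dimensional spectral question about the discrete spectrum of $\mathcal L$ below $m$. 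This is the structural gain over $H$, whose quadratic part has no definite sign.

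The two continuous symmetries of the MTM system, gauge rotation and spatial translation, produce two neutral directions of $\mathcal L$, namely $\vec w_1 = i(U_\omega,\bar U_\omega)$ and $\vec w_2 = (U_\omega', \bar U_\omega')$; differentiating the critical-point identity in $\alpha$ and $x_0$ yields $\mathcal L \vec w_1 = \mathcal L \vec w_2 = 0$. The crucial step is then to prove that the kernel of $\mathcal L$ is exactly $\mathrm{span}\{\vec w_1, \vec w_2\}$ (non-degeneracy) and to determine the Morse index $n(\mathcal L)$. Here I would exploit the explicit profile $U_\omega$ to reduce the kernel and eigenvalue problems to solvable ODE / hypergeometric problems, or alternatively use the Lax structure to express $\mathcal L$ through squared eigenfunctions of the associated spectral problem. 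Once $n(\mathcal L)$ and $\ker \mathcal L$ are known, I would compute the $2\times 2$ matrix $D$ of derivatives of $(Q,P)$ with respect to the soliton parameters and show, by a Sylvester-type count, that restricting to the constraint tangent space $\{\vec w : \langle Q'(U_\omega),\vec w\rangle = \langle P'(U_\omega),\vec w\rangle = 0\}$ removes all negative directions, so that $\mathcal L$ is positive definite modulo $\mathrm{span}\{\vec w_1,\vec w_2\}$.

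I expect this last spectral/counting step to be the main obstacle, and it is here that the restriction $|\omega|<\omega_0$ enters: the sign condition on the constraint matrix $D$, a Vakhitov--Kolokolov-type determinant, is most transparent near $\omega = 0$, where the profile and its derivatives simplify, and I would establish it by a perturbative computation about $\omega = 0$, leaving the precise threshold $\omega_0$ implicit. Finally, given the coercivity $\langle \mathcal L \vec w, \vec w\rangle \geq \delta \|\vec w\|_{H^1}^2$ on the constrained, symmetry-orthogonal subspace, orbital stability follows by a routine modulation argument: for data $H^1$-close to the soliton, local well-posedness in $H^1$ together with conservation of $R$, $Q$, $P$ bounds the perturbation uniformly in time, after choosing modulation parameters $\alpha(t), x_0(t)$ that keep $\vec w(t)$ orthogonal to the neutral directions.
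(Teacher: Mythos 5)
Your overall architecture coincides with the paper's: the authors likewise augment $R$ to the Lyapunov functional $\Lambda_{\omega} = R + (1-\omega^2) Q$ (the momentum multiplier is indeed zero for the stationary soliton, and $c_1 = 1-\omega^2$ is read off by substituting the first-order soliton equation into the second-order Euler--Lagrange system), expand to a self-adjoint Hessian whose essential spectrum sits in $[1-\omega^2,\infty)$, identify the gauge and translation zero modes, remove the negative direction by a constrained Vakhitov--Kolokolov-type count, and conclude by the standard Grillakis--Shatah--Strauss machinery. One structural tool you do not anticipate is the orthogonal similarity transformation that block-diagonalizes the $4\times 4$ Hessian into the two $2\times 2$ operators $L_{\pm}$ of (\ref{Hessian-plus}) via the reductions $v = \pm \bar{u}$; it is this reduction, followed by gauge-type transformations to scalar Schr\"{o}dinger problems handled with Sturm oscillation and comparison theorems, that makes the spectrum accessible at all--your hope of ``solvable ODE / hypergeometric problems'' for the full matrix operator is not otherwise substantiated.

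The genuine gap is in your final counting step, where you locate the restriction $|\omega| < \omega_0$ in the sign of the constraint matrix $D$ and propose to verify it ``perturbatively about $\omega = 0$.'' This is backwards relative to what actually happens. The VK quantities are computable in closed form for \emph{all} $\omega \in (-1,1)$: using $\| U_{\omega} \|_{L^2}^2 = \arccos(\omega)$, the paper finds $\sigma_+ = -\frac{1}{2\omega\sqrt{1-\omega^2}}$ for the $Q$-type constraint on $L_+$ and $\sigma_- = \frac{\sqrt{1-\omega^2}}{2\omega}$ for the $P$-type constraint on $L_-$, so no smallness of $\omega$ is needed there. What is known only perturbatively is the \emph{Morse index of $L_+$}: at $\omega = 0$ the blocks decouple and each of $L_{\pm}$ acquires a second zero mode (\ref{subspace-2}), so the full Hessian has a four-dimensional kernel; the sign of the eigenvalue splitting off this extra zero mode for $\omega \neq 0$ is obtained by Kato perturbation theory (Lemma \ref{lemma-eigenvalues-2}), and it is exactly the control of the number of $L_+$ eigenvalues below the continuous spectrum away from $\omega = 0$ that forces $\omega_0$ possibly less than $1$ -- the paper leaves the extension to all $\omega$ as an explicit conjecture. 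Moreover, your proposed perturbative VK computation is singular at the very point you expand about: $\sigma_{\pm}$ diverge as $\omega \to 0$, and the constrained count succeeds at small $\omega \neq 0$ only because the emerging small eigenvalue of $L_+$ (respectively $L_-$) is negative precisely on the side where $\sigma_+ < 0$ (respectively $\sigma_- < 0$), a sign matching the paper verifies by exact computation; at $\omega = 0$ itself one must check separately that the extra kernel vectors violate the constraints, done via the computation (\ref{computation}). So the neighborhood of $\omega = 0$ is the most delicate regime, not, as you assert, the most transparent one, and as written your plan for the decisive spectral step would not close.
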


From a technical point, we establish that the MTM solitons (\ref{MTM-solitons}) are critical points
of the functional $\Lambda_{\omega} = R + (1-\omega^2) Q$, where $\omega \in (-1,1)$ is
the same parameter of the MTM solitons (\ref{MTM-form}). By using operator calculus in constrained spaces,
we prove that there is $\omega_0 \in (0,1]$ such that
for any fixed $\omega \in (-\omega_0,\omega_0)$, the functional
$\Lambda_{\omega}$ is strictly convex at the MTM soliton
$(u,v) = (U_{\omega},\bar{U}_{\omega})$
under the constraints of fixed values of $Q$ and $P$. As a result, $\Lambda_{\omega}$ can
serve as a Lyapunov functional for orbital stability of the MTM solitons
thanks to the conservation of $R$, $Q$, and $P$ and the standard analysis
of orbital stability of solitary waves \cite{GSS}. Appendix B states relevant results used
in our work.

Note that the non-degenerate minimizer in Theorem \ref{theorem-main} can be translated
along two ``trivial" parameters $\alpha$ and $x_0$ in (\ref{MTM-solitons}). These parameters
are related to the gauge and space translations and can be excluded by additional constraints
on the perturbations to the MTM soliton $(u,v) = (U_{\omega},\bar{U}_{\omega})$. Whereas
we have not succeeded to find the exact value of $\omega_0$, we conjecture that $\omega_0 = 1$,
that is, the result of Theorem \ref{theorem-main} extends to the entire family of MTM solitons.

We also mention some recent relevant results.
First, orbital stability of breathers
of the modified KdV equation is proved in space $H^2(\mathbb{R})$ in the recent work of Alejo and Munoz \cite{AM}. 
An additional conserved quantity is introduced and used to complement conservation of the 
Hamiltonian and momentum of the modified KdV equation.
This work is conceptually similar to the ideas of our paper with the following difference. 
It uses the known
characterization of orbital stability of  multi-solitons in the KdV and modified KdV equations
with higher-order conserved quantities \cite{Perelman,MS}, whereas our work introduces 
a new concept of orbital stability of Dirac solitons.

Second, a different technique involving additional conserved quantities is proposed in 
the work of Deconinck and Kapitula \cite{DecKap}, where no constraints are imposed to study orbital stability of
periodic waves with respect to perturbations of multiple period in the KdV equation. The lack of minimizing properties
of the higher-order Hamiltonian is corrected by adding lower-order Hamiltonians with specially selected strength parameter.

As a bi-product of our work, we obtain a global apriori bound on
$L^2$-small solutions of the MTM system in $H^1(\R)$. The standard apriori bounds
on the $H^1$ norm of the solution of a general nonlinear Dirac equation
grow at a double-exponential rate \cite{GWH,Pel-survey}.
At the present time, we do not know if global bounds on the $H^1$ norm can be proven for
all (not $L^2$-small) solutions of the MTM system (\ref{MTM}).

We also do not know if scattering to the MTM solitons (and hence asymptotic stability of the MTM solitons)
can be proved by using the inverse scattering transform methods, e.g. the
auto--B\"{a}cklund transformation, similar to what was done recently for NLS solitons
\cite{DP,MizPel}. (Scattering to zero solutions
and criterion for the absence of the MTM solitons were established earlier in
\cite{HN1,HN2} and \cite{Pel-survey}, respectively.) These problems remain open for further studies.

The paper is organized as follows. Global bounds on the $H^1$ norm of the $L^2$-small solutions are obtained
in Section 2. In Section 3, we prove that the MTM solitons are non-degenerate
minimizers of the functional $\Lambda_{\omega}$ in $H^1$ under the constraints
of fixed $Q$ and $P$ for $\omega \in (-\omega_0,\omega_0)$
with some $\omega_0 \in (0,1]$. Appendix A reports derivation of the conserved quantity $R$.
Appendix B lists a number of technical results without proofs.

\section{Global $H^1$ bound on the $L^2$-small solutions}

Thanks to local existence in $L^2$ \cite{Candy} and the conservation of $Q$, the $L^2$ solutions are extended
globally for all $t \in \R$ with a global bound on the $L^2$ norm of the solution.
On the other hand, local existence holds also in $H^n(\R)$ for any integer $n \in \N$ but
the apriori bounds on the $H^n$ norm grows at a super-exponential rate \cite{GWH,Pel-survey}. Here we
use the conservation of $R$ to obtain the global bound on the $H^1$ norm of the solution,
which holds for all $L^2$-small solutions. The following theorem gives the main result
of this section.

\begin{theorem}
There is a $Q_0 > 0$ such that for all $(u_0,v_0) \in H^1(\R)$ with
$\| u_0 \|_{L^2}^2 + \| v_0 \|_{L^2}^2 \leq Q_0$, there is a positive
$(u_0,v_0)$-dependent constant $C(u_0,v_0)$ such that
\begin{equation}
\label{bound-global}
\| u \|_{H^1}^2 + \| v \|_{H^1}^2 \leq C(u_0,v_0),
\end{equation}
for all $t \in \R$.
\label{theorem-bound}
\end{theorem}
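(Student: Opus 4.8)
The plan is to use the conservation of $R$ together with the conservation of $Q$, exploiting the fact that the only quadratic term in $R$ is precisely the homogeneous $\dot H^1$ seminorm. Writing $D := \|u_x\|_{L^2}^2 + \|v_x\|_{L^2}^2$ and separating the quadratic part of (\ref{quantity-R}) from the rest, I would record the identity $D = R - I_1 - I_2 - I_3$, where $I_1$ collects the two quartic terms carrying one derivative, $I_2 = -\int_\R (u\bar v + \bar u v)(|u|^2+|v|^2)\,dx$ is the quartic term without derivatives, and $I_3 = 2\int_\R |u|^2|v|^2(|u|^2+|v|^2)\,dx$ is the sextic term. Since $D \ge 0$, this gives $D \le |R| + |I_1| + |I_2| + |I_3|$, and everything reduces to controlling the nonlinear integrals by $D$ and the small conserved charge $Q = \|u\|_{L^2}^2 + \|v\|_{L^2}^2$.

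The key mechanism is that each nonlinear term carries a strictly positive power of $Q$ once estimated by the one-dimensional Gagliardo--Nirenberg (Agmon) inequalities $\|f\|_{L^\infty}^2 \le C\|f\|_{L^2}\|f_x\|_{L^2}$ and $\|f\|_{L^6}^3 \le C\|f\|_{L^2}^2\|f_x\|_{L^2}$. Using $|u_x\bar u - \bar u_x u| \le 2|u||u_x|$ and the Cauchy--Schwarz inequality, I expect bounds of the form $|I_1| \le C_1 Q D$, $|I_2| \le C_2 Q^{3/2} D^{1/2}$, and $|I_3| \le C_3 Q^2 D$. The diagonal contributions to $I_1$ follow from $\int_\R |u_x||u|^3\,dx \le \|u_x\|_{L^2}\|u\|_{L^6}^3 \le C\|u\|_{L^2}^2\|u_x\|_{L^2}^2$, the mixed contributions and the whole of $I_3$ are handled by extracting an $L^\infty$ norm and applying the Agmon estimate, while $I_2$ follows from $\int_\R |u|^3|v|\,dx \le \|u\|_{L^\infty}^2 \cdot \tfrac12(\|u\|_{L^2}^2+\|v\|_{L^2}^2)$.

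Combining these estimates yields the quadratic inequality $D \le |R| + C_1 Q D + C_2 Q^{3/2} D^{1/2} + C_3 Q^2 D$. I would then fix $Q_0 > 0$ small enough that $C_1 Q_0 + C_3 Q_0^2 \le \tfrac14$, so that for $Q \le Q_0$ the terms proportional to $D$ are absorbed into the left-hand side, leaving $\tfrac34 D \le |R| + C_2 Q^{3/2} D^{1/2}$. Young's inequality applied to the last term, $C_2 Q^{3/2}D^{1/2} \le \tfrac14 D + C_2^2 Q^3$, gives the closed bound $D \le 2|R| + 2C_2^2 Q^3$. Since both $R$ and $Q$ are conserved, the right-hand side equals a constant determined by the initial data, and adding back $Q$ gives $\|u\|_{H^1}^2 + \|v\|_{H^1}^2 = Q + D \le C(u_0,v_0)$ with $C(u_0,v_0) := Q(u_0,v_0) + 2|R(u_0,v_0)| + 2C_2^2 Q(u_0,v_0)^3$; finiteness uses that $R$ is well defined on $H^1(\R)$.

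The main obstacle is not the estimates but the rigorous use of the conservation laws at the $H^1$ level: conservation of $R$ is derived in Appendix A for smooth solutions, so to apply it to merely $H^1$ data I would approximate $(u_0,v_0)$ by smoother data, for which local well-posedness in $H^n$ and conservation of $R$ and $Q$ are classical, derive the a priori bound above for the smooth solutions, and pass to the limit using continuous dependence of the $L^2$-solution on the initial data. The a priori bound then rules out finite-time blow-up of the $H^1$ norm, so the local $H^1$ solution extends globally by the blow-up alternative. I also emphasize that the $L^2$-smallness is genuinely needed: the quartic term $I_1$ scales like $QD$ with no additional small factor, so without $Q \le Q_0$ the coefficient $C_1 Q$ cannot be absorbed, which is consistent with the authors' remark that a global $H^1$ bound for arbitrary solutions remains open.
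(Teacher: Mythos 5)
Your proposal is correct and takes essentially the same route as the paper: both rely on conservation of $R$ and $Q$, use the one-dimensional Gagliardo--Nirenberg inequalities to bound the quartic terms of $R$ (with the derivative-carrying quartic terms, which scale like $Q\,D$, absorbed only thanks to the $L^2$-smallness of $Q$), and invoke the standard approximation argument for conservation of $R$ plus a continuation argument to globalize. Your write-up merely makes the paper's sketch quantitative --- an explicit quadratic inequality in $D$ closed by Young's inequality, and an estimate of the sextic term where the paper simply keeps it as a positive contribution.
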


\begin{proof}
By Sobolev embedding of $H^1(\R)$ into $L^p(\R)$ for any $p \geq 2$, the values of
$Q$ and $R$ are finite if $(u_0,v_0) \in H^1(\R)$. To obtain the assertion of the theorem,
we need to show that the value of $R$ gives an upper bound for the value of
$\| \partial_x u \|_{L^2}^2 + \| \partial_x v \|_{L^2}^2$.
Then, the standard approximation argument in Sobolev space $H^2(\R)$ yields a conservation
of $R$ from the balance equation [see equation (\ref{balance-R}) in Appendix A],
whereas the standard continuation argument
yields the global bound (\ref{bound-global}).

The lower bound for $R$ follows from two applications of the Gagliardo--Nirenberg inequality
in one dimension. For any $p \geq 1$, there is a constant $C_p > 0$ such that
\begin{equation}
\label{GN-inequality}
\| u \|^{2p}_{L^{2p}} \leq C_p \| \partial_x u \|_{L^2}^{p-1} \| u \|_{L^2}^{1+p},
\quad u \in H^1(\R).
\end{equation}

First, we note that quadratic and sixth-order terms are positive definite.
To control the last fourth-order terms of $R$ from below, we note that
there is a positive constant $C$ (which may change from line to another line) such that
\begin{eqnarray*}
\left| \int_{\R} (|u|^2 + |v|^2) (v \bar{u} + u \bar{v}) dx \right| & \leq & C \left( \| u \|^4_{L^4} +
\| v \|_{L^4}^4 \right) \\
& \leq & C \left( \| u \|_{L^2}^3 + \| v \|^3_{L^2} \right)
\left( \| \partial_x u \|_{L^2} + \| \partial_x v \|_{L^2} \right).
\end{eqnarray*}
Because the positive part of $R$ is quadratic in $\| \partial_x u \|_{L^2}$ and $\| \partial_x v \|_{L^2}$,
the previous bound is sufficient to control the last fourth-order terms of $R$ from below.
On the other hand, the first fourth-order terms like
\begin{eqnarray*}
\left| \int_{\R} (u_x\overline{u}-\overline{u}_xu) |u|^2 dx \right| \leq C \| u \|_{L^6}^3 \| \partial_x u \|_{L^2}
\leq C \| u \|_{L^2}^2 \| \partial_x u \|^2_{L^2}
\end{eqnarray*}
can only be controlled from below if $\| u \|_{L^2}^2$ is sufficiently small. This proves the assertion of the theorem.
\end{proof}

\begin{remark}
One can try to squeeze the first fourth-order terms of $R$ between the positive quadratic and sixth-order terms of $R$. For example,
if reduction $v = \bar{u}$ is used, these terms of $R$ are estimated from below by
\begin{eqnarray*}
2 \| \partial_x u \|^2_{L^2} + 4 \| u \|_{L^6}^6 - 3 i \int_{\R} (u_x\overline{u}-\overline{u}_xu) |u|^2 dx
\geq 2 \| \partial_x u \|^2_{L^2} + 4 \| u \|_{L^6}^6 - 6  \| \partial_x u \|_{L^2} \| u \|_{L^6}^3.
\end{eqnarray*}
Unfortunately, the lower bound is not positive definite. Therefore, we do not know if the global bound (\ref{bound-global})
can be extended to all (not necessarily $L^2$-small) solutions of the MTM system (\ref{MTM}).
\end{remark}

\section{$H^1$ orbital stability of solitons}

Critical points of the energy functional $H + \omega Q$ with fixed $\omega \in (-1,1)$
satisfy the system of first-order differential equations
\begin{equation}
\label{MTM-ODE}
\left\{ \begin{array}{cc}
+ i \frac{d u}{d x} - \omega u + v = 2 |v|^2 u, \\
- i \frac{d v}{d x} - \omega v + u = 2 |u|^2 v.
\end{array} \right.
\end{equation}
The stationary MTM solitons (\ref{MTM-solitons}) correspond to
the reduction $u = U_{\omega}$ and $v = \bar{U}_{\omega}$,
where $U_{\omega}$ is a solution of the first-order differential equation
\begin{equation}
\label{ODE}
i \frac{d U}{d x} - \omega U + \bar{U} = 2 |U|^2 U.
\end{equation}
Integrating this differential equation with the zero boundary conditions,
we obtain MTM solitons in the explicit form (\ref{MTM-form}).

\begin{remark}
Two translational parameters of the MTM solitons (\ref{MTM-solitons})
are obtained from the gauge and space translations:
\begin{equation}
\label{translations}
u(x,t) \mapsto u(x+x_0,t) e^{i \alpha}, \quad v(x,t) \mapsto v(x+x_0,t) e^{i \alpha},
\end{equation}
where $\alpha$ and $x_0$ are real-valued. On the other hand, more general moving MTM solitons
must have another parameter of velocity $c \in (-1,1)$, which can be recovered by
using the Lorentz transformation:
\begin{eqnarray}
\label{Lorentz}
\left\{ \begin{array}{l} u(x,t) \mapsto \left( \frac{1 + c}{1 - c} \right)^{1/4}
u\left(\frac{x-ct}{\sqrt{1-c^2}},\frac{t - cx}{\sqrt{1-c^2}}\right), \\
v(x,t) \mapsto  \left( \frac{1 - c}{1 + c} \right)^{1/4}
v\left(\frac{x-ct}{\sqrt{1-c^2}},\frac{t - cx}{\sqrt{1-c^2}}\right).
\end{array} \right.
\end{eqnarray}
In what follows, without the loss of generality,
we simplify our consideration by working with the stationary MTM solitons for $c = 0$.
\end{remark}

Critical points of the energy functional $R + \Omega Q$ satisfy the system of
second-order differential equations
\begin{equation}
\label{MTM-ODE-second-order}
\left\{ \begin{array}{cc}
\frac{d^2 u}{d x^2} + 2 i (|u|^2 + |v|^2) \frac{d u}{dx} + 2 i u v \frac{d \bar{v}}{dx}
- 2 |v|^2 (2 |u|^2 + |v|^2) u + (2 |u|^2 + |v|^2) v + u^2 \bar{v} = \Omega u, \\
\frac{d^2 v}{d x^2} - 2 i (|u|^2 + |v|^2) \frac{d v}{dx} - 2 i u v \frac{d \bar{u}}{dx}
- 2 |u|^2 (|u|^2 + 2 |v|^2) v + (|u|^2 + 2 |v|^2) u + v^2 \bar{u} = \Omega v.
\end{array} \right.
\end{equation}
Using the reduction $u = U$ and $v = \bar{U}$,
we obtain a second-order differential equation
\begin{equation}
\label{ODE-second-order}
\frac{d^2 U}{d x^2} + 6 i |U|^2 \frac{d U}{dx}
- 6 |U|^4 U + 3 |U|^2 \bar{U} + U^3 = \Omega U.
\end{equation}

Substituting the first-order equation (\ref{ODE}) to the second-order equation (\ref{ODE-second-order}) yields
the constraint
$$
(1-\omega^2) U + \left(2 |U|^4 + 2 \omega |U|^2 - U^2 - \bar{U}^2 \right) U = \Omega U,
$$
which is satisfied by the MTM soliton $U = U_{\omega}$ in the explicit form (\ref{MTM-form})
if $\Omega = 1 - \omega^2$. Therefore, the MTM soliton (\ref{MTM-form}) is a critical point of the
energy functional
\begin{equation}
\label{Lyapunov}
\Lambda_{\omega} := R + (1-\omega^2) Q, \quad \omega \in (-1,1)
\end{equation}
in the energy space $H^1(\R,\C^2)$.

We shall now prove that there is $\omega_0 \in (0,1]$ such that
for any fixed $\omega \in (-\omega_0,\omega_0)$,
the critical point of $\Lambda_{\omega}$
is a local non-degenerate minimizer in the constrained space $X_{\omega}$,
which is defined as an orthogonal complement
in $H^1(\R,\C^2)$ of the following complex-valued constraints:
\begin{eqnarray}
\label{constraint-1}
(u,v) \in \C^2 : \quad \int_{\R} \left( \bar{U}_{\omega} u + U_{\omega} v \right) dx & = & 0, \\
\label{constraint-2}
(u,v) \in \C^2 : \quad \int_{\R} \left( \bar{U}'_{\omega} u + U'_{\omega} v \right) dx & = & 0,
\end{eqnarray}
where the prime denotes the derivative of $U_{\omega}$ with respect to $x$.

The real part of the constraint (\ref{constraint-1}) is equivalent to the condition
that the conserved quantity $Q$ is fixed under the perturbation $(u,v)$ to the MTM
soliton $(U_{\omega},\bar{U}_{\omega})$ at the first order. The imaginary part
of the constraint (\ref{constraint-1}) represents the orthogonality of
the perturbation $(u,v,\bar{u},\bar{v})$ to the following eigenvector of
a linearization operator for the zero eigenvalue,
\begin{equation}
\label{eigenvector-1}
{\bf F}_g := \left[ \begin{array}{c} i U_{\omega} \\ i \bar{U}_{\omega} \\
- i \bar{U}_{\omega} \\ -i U_{\omega} \end{array} \right],
\end{equation}
which is induced by the gauge translation of the MTM soliton $(U_{\omega},\bar{U}_{\omega},\bar{U}_{\omega},U_{\omega})$
related to the parameter $\alpha$ in the transformation (\ref{translations}).

Similarly, the imaginary part of the constraint (\ref{constraint-2}) is equivalent to the condition
that the conserved quantity $P$ is fixed under the perturbation $(u,v)$ to the MTM
soliton $(U_{\omega},\bar{U}_{\omega})$ at the first order. The real part
of the constraint (\ref{constraint-2}) represents the orthogonality of
the perturbation $(u,v,\bar{u},\bar{v})$ to the following eigenvector of
a linearization operator for the zero eigenvalue,
\begin{equation}
\label{eigenvector-2}
{\bf F}_s := \left[ \begin{array}{c} U_{\omega}' \\ \bar{U}_{\omega}' \\
\bar{U}_{\omega}' \\ U_{\omega}' \end{array} \right],
\end{equation}
which is induced by the space translation of the MTM soliton $(U_{\omega},\bar{U}_{\omega},\bar{U}_{\omega},U_{\omega})$
related to the parameter $x_0$ in the transformation (\ref{translations}).

The following theorem gives the main result of this section.

\begin{theorem}
There is $\omega_0 \in (0,1]$ such that
for any fixed $\omega \in (-\omega_0,\omega_0)$, the Lyapunov functional
$\Lambda_{\omega}$ defined by (\ref{Lyapunov}) is strictly convex at
$(u,v) = (U_{\omega},\bar{U}_{\omega})$ in the orthogonal complement of the complex-valued
constraints (\ref{constraint-1}) and (\ref{constraint-2}) in $H^1(\R,\C^2)$.
\label{theorem-minimum}
\end{theorem}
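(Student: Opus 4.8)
The plan is to realize the statement as positivity of the Hessian of $\Lambda_{\omega}$ on the constrained space $X_{\omega}$ and to feed this into the Grillakis--Shatah--Strauss machinery recalled in Appendix B. First I would compute the second variation of $\Lambda_{\omega}$ at the critical point $(u,v)=(U_{\omega},\bar U_{\omega})$. Writing the perturbation as a four-component vector $\vec{w}=(u,v,\bar u,\bar v)^{T}$, so that the real quadratic form is organized against complex conjugation, the second variation defines a self-adjoint matrix operator $\mathcal{L}_{\omega}$ on $L^2(\R,\C^4)$ with domain $H^2$. Because the only contribution that is quadratic in derivatives comes from the term $\|u_x\|_{L^2}^2+\|v_x\|_{L^2}^2$ of $R$, the principal part of $\mathcal{L}_{\omega}$ is $-\partial_x^2$, and the remaining entries are bounded multiplication operators built from $U_{\omega},U_{\omega}'$ and their conjugates, all exponentially decaying as $|x|\to\infty$. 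Strict convexity at the soliton is then precisely the bound $\langle \mathcal{L}_{\omega}\vec{w},\vec{w}\rangle \geq \delta\,\|\vec{w}\|_{H^1}^2$ for all $\vec{w}\in X_{\omega}$ and some $\delta>0$.

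The second step is to locate the spectrum of $\mathcal{L}_{\omega}$ on the whole space. Since the potential entries decay, Weyl's theorem gives $\sigma_{\mathrm{ess}}(\mathcal{L}_{\omega})=[1-\omega^2,\infty)$: the constant-coefficient operator obtained by sending $U_{\omega}\to 0$ is $-\partial_x^2+(1-\omega^2)$, whose symbol $\xi^2+(1-\omega^2)$ is bounded below by $1-\omega^2>0$ for $\omega\in(-1,1)$. Consequently the only nonpositive spectrum consists of a finite set of isolated eigenvalues below $1-\omega^2$, and the whole question reduces to a finite-dimensional count. I would then verify that $\mathbf{F}_g$ and $\mathbf{F}_s$ lie in $\ker\mathcal{L}_{\omega}$ by differentiating the profile equation (\ref{ODE}) with respect to the gauge and translation parameters $\alpha$ and $x_0$; the substance of the ``non-degenerate'' claim is that these two modes exhaust the kernel.

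The third step is the index count together with the constrained positivity criterion. Having established that $\mathcal{L}_{\omega}$ has positive essential spectrum, a finite number $n(\mathcal{L}_{\omega})$ of negative eigenvalues, and $\ker\mathcal{L}_{\omega}=\mathrm{span}\{\mathbf{F}_g,\mathbf{F}_s\}$, I would invoke the constrained-space lemma of Appendix B. The two imaginary constraints enforce $\vec{w}\perp\mathbf{F}_g$ and $\vec{w}\perp\mathbf{F}_s$, removing the kernel, while the real part of (\ref{constraint-1}) and the imaginary part of (\ref{constraint-2}) enforce $\delta Q=0$ and $\delta P=0$. Positivity of $\mathcal{L}_{\omega}$ on $X_{\omega}$ then follows provided the finite-dimensional form obtained by testing $\mathcal{L}_{\omega}^{-1}$ against the two constraint gradients has the matching signature; by the standard reduction this signature is read off from the $2\times 2$ matrix of derivatives of $(Q,P)$ evaluated on the soliton family with respect to its parameters $(\omega,c)$, with the Lorentz structure (\ref{Lorentz}) controlling the $c$-dependence. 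I would compute these derivatives from the explicit profile (\ref{MTM-form}) and check that $n(\mathcal{L}_{\omega})$ is exactly balanced by the two constraints.

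The hard part will be the purely spectral input to the third step: determining $n(\mathcal{L}_{\omega})$ and proving that the kernel is no larger than $\mathrm{span}\{\mathbf{F}_g,\mathbf{F}_s\}$ for a $4\times 4$ (equivalently, after the reduction $v=\bar u$, a $2\times 2$) matrix operator whose coefficients, though explicit, are not of a textbook solvable form, so that a closed-form diagonalization uniform in $\omega$ is unlikely to be available. My plan is therefore to carry out the count at $\omega=0$, where $U_{\omega}$ and hence $\mathcal{L}_{\omega}$ simplify and the spectral problem becomes tractable, to establish strict convexity there, and then to extend it to an interval $|\omega|<\omega_0$ by a perturbation and continuity argument: the essential spectrum stays bounded away from $0$ uniformly on compact subsets of $(-1,1)$, so the finitely many small eigenvalues depend continuously on $\omega$ and cannot cross zero without contradicting the non-degeneracy of the kernel. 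This is exactly what forces $\omega_0\leq 1$ in the statement and leaves the extension to the whole spectral gap as the conjecture $\omega_0=1$.
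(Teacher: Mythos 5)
Your overall architecture is the paper's: Hessian with principal part $-\partial_x^2$, essential spectrum $[1-\omega^2,\infty)$ by Weyl, symmetry kernel $\{{\bf F}_g,{\bf F}_s\}$, and a constrained index count in the style of Theorem E, with the signs read off from derivatives of $(Q,P)$ along the $(\omega,c)$ soliton family --- this last point is exactly the paper's Lemma \ref{lemma-constraints}, where $\sigma_+ = -\frac{1}{2\omega\sqrt{1-\omega^2}}$ comes from $\frac{d}{d\omega}\|U_\omega\|_{L^2}^2$ with $\|U_\omega\|_{L^2}^2=\arccos\omega$, and $\sigma_-=\frac{\sqrt{1-\omega^2}}{2\omega}$ comes from the generalized eigenvector that is the $c$-derivative of the Lorentz-boosted soliton.

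However, your step 4 --- the ``hard part'' --- contains a genuine gap, and it sits precisely at the base point of your perturbation argument. At $\omega=0$ the kernel is \emph{not} $\mathrm{span}\{{\bf F}_g,{\bf F}_s\}$: it is four-dimensional, because $L_+(U_0',-\bar U_0')^T=0$ and $L_-(U_0,\bar U_0)^T=0$ in addition to the symmetry modes (the eigenvectors (\ref{subspace-2})). Consequently your no-crossing principle (``the finitely many small eigenvalues \dots cannot cross zero without contradicting the non-degeneracy of the kernel'') is false for this problem: eigenvalues of $L_\pm$ \emph{do} cross zero at $\omega=0$, with $L_+$ acquiring a negative eigenvalue for $\omega\in(0,\omega_0)$ and $L_-$ for $\omega\in(-1,0)$, so $n(\mathcal{L}_\omega)=1$ for every small $\omega\neq 0$ even though $n(\mathcal{L}_0)=0$. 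Strict convexity therefore does not propagate from $\omega=0$ by continuity of a gapped spectrum; what must be shown instead is that the sign of the splitting of the double zero eigenvalue (computed in the paper via Kato's formula, from the quadratic form $-2\int(V_2+\bar V_2)|\varphi_0|^2\,dz$) flips at $\omega=0$ in lockstep with the sign flip of $\sigma_\pm$, so that Theorem E removes exactly the one negative direction on either side. In addition, your base case at $\omega=0$ needs the separate verification that the two extra kernel vectors violate the constraints --- the paper's computation (\ref{computation}), $\int(\bar U_0 U_0'-U_0\bar U_0')\,dx=-2i\neq 0$ --- without which constrained positivity at $\omega=0$ is unproven. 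Finally, a smaller point: you are too pessimistic about closed-form spectral analysis. After a gauge transformation $u=\varphi e^{-i\int_0^x|U_\omega|^2dx'}$ the operator $L_-$ diagonalizes into two scalar Schr\"{o}dinger problems with explicit $\cosh(2z)$ potentials, and the paper counts its eigenvalues for \emph{all} $\omega\in(-1,1)$ by Sturm nodal and comparison theorems against explicit endpoint-resonance solutions; only $L_+$, whose off-diagonal potential is genuinely complex, forces the perturbative treatment and is the sole reason for the restriction $\omega_0\le 1$.
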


To prove Theorem \ref{theorem-minimum}, we use a perturbation $(u,v,\bar{u},\bar{v})$
to the MTM soliton $(U_{\omega},\bar{U}_{\omega},\bar{U}_{\omega},U_{\omega})$
and expand the Lyapunov functional $\Lambda_{\omega}$ to the quadratic form in $(u,v,\bar{u},\bar{v})$,
which is defined by the Hessian operator
\begin{equation}
\label{Hessian}
L = \left[ \begin{array}{cccc} L_1 & 2 L_2 & L_2 & L_3 \\
2 \bar{L}_2 & \bar{L}_1 & \bar{L}_3 & \bar{L}_2 \\
\bar{L}_2 & \bar{L}_3 & \bar{L}_1 & 2 \bar{L}_2 \\
L_3 & L_2 & 2 L_2 & L_1 \end{array} \right],
\end{equation}
where
\begin{eqnarray*}
L_1 & = & -\frac{d^2}{dx^2} - 4 i |U_{\omega}|^2 \frac{d}{dx} - 4 i \bar{U}_{\omega} \frac{d U_{\omega}}{dx} + 10 |U_{\omega}|^4 - 2 U_{\omega}^2
- 2 \bar{U}_{\omega}^2 + 1 - \omega^2, \\
L_2 & = &  -2 i U_{\omega} \frac{d U_{\omega}}{dx} + 4 U_{\omega}^2 |U_{\omega}|^2 - 2 |U_{\omega}|^2, \\
L_3 & = & - 2 i |U_{\omega}|^2 \frac{d}{dx} - 2 i \bar{U}_{\omega} \frac{d U_{\omega}}{dx} + 8 |U_{\omega}|^4 - U_{\omega}^2 - \bar{U}_{\omega}^2.
\end{eqnarray*}

Similarly to the case of linearized Dirac equations \cite{ChPel-cme},
the $4\times 4$ matrix operator $L$ is diagonalized by
two $2 \times 2$ matrix operators $L_{\pm}$ by means of the orthogonal
similarity transformation
\begin{eqnarray*} \label{matrixS}
S^T L S = \left[ \begin{array}{cc} L_+ & 0 \\ 0 & L_- \end{array} \right], \quad \mbox{\rm where} \quad
S =\frac{1}{\sqrt{2}} \left[ \begin{array}{cccc} 1 & 0 & -1 & 0 \\ 0 & 1 & 0 & 1 \\ 0 & 1 & 0 & -1 \\ 1 & 0 & 1 & 0 \end{array} \right].
\end{eqnarray*}
The matrix operators $L_{\pm}$ are found from the reduction of $L$ under the constraints $v = \pm \bar{u}$:
\begin{equation}
\label{Hessian-plus}
L_+ = \left[ \begin{array}{cc} \ell_+ & - 6 \omega U_{\omega}^2 \\ - 6 \omega \bar{U}_{\omega}^2 & \bar{\ell}_+ \end{array} \right], \quad
L_- = \left[ \begin{array}{cc} \ell_- & 2 \omega U_{\omega}^2 \\ 2 \omega \bar{U}_{\omega}^2 & \bar{\ell}_- \end{array} \right],
\end{equation}
where
\begin{eqnarray*}
\ell_+ & = & -\frac{d^2}{dx^2} - 6 i |U_{\omega}|^2 \frac{d}{dx}
+ 6 |U_{\omega}|^4 - 3 U_{\omega}^2 + 3 \bar{U}_{\omega}^2 - 6 \omega |U_{\omega}|^2
+ 1 - \omega^2, \\
\ell_- & = & -\frac{d^2}{dx^2} - 2 i |U_{\omega}|^2 \frac{d}{dx}
- 2 |U_{\omega}|^4 - U_{\omega}^2 + \bar{U}_{\omega}^2 - 2 \omega |U_{\omega}|^2
+ 1 - \omega^2,
\end{eqnarray*}
and the first-order differential equation (\ref{ODE}) for $U_{\omega}$ has been used.
Thanks to the exponential decay of $U_{\omega}$ to $0$ at infinity, by Weyl's Lemma,
the continuous spectrum of operators $L_{\pm}$ is located on the semi-infinite interval $[1-\omega^2,\infty)$
with $1 - \omega^2 > 0$. The following results characterize the discrete spectrum of operators $L_{\pm}$.

\begin{proposition}
For any $\omega \in (-1,1)$, we have
\begin{equation}
\label{subspace-1}
L_+ \left[ \begin{array}{c} U_{\omega}' \\ \bar{U}_{\omega}' \end{array} \right] = \left[ \begin{array}{c} 0 \\ 0 \end{array} \right], \quad
L_- \left[ \begin{array}{c} U_{\omega} \\ -\bar{U}_{\omega} \end{array} \right] = \left[ \begin{array}{c} 0 \\ 0 \end{array} \right],
\end{equation}
which represent the eigenvectors (\ref{eigenvector-1}) and (\ref{eigenvector-2}). In addition, for $\omega = 0$,
there is a zero eigenvalue associated with the eigenvectors
\begin{equation}
\label{subspace-2}
\omega = 0 : \quad L_+ \left[ \begin{array}{c} U_0' \\ -\bar{U}_0' \end{array} \right] =
\left[ \begin{array}{c} 0 \\ 0 \end{array} \right], \quad
L_- \left[ \begin{array}{c} U_0 \\ \bar{U}_0 \end{array} \right] =
\left[ \begin{array}{c} 0 \\ 0 \end{array} \right].
\end{equation}
\end{proposition}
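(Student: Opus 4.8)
The claim is a pair of direct verifications: that the six stated vectors lie in the kernels of the operators $L_\pm$. The plan is to treat these as explicit computations built on the governing ODE (\ref{ODE}) for $U_\omega$, differentiating it to generate the needed identities, rather than appealing to any abstract symmetry argument (though the symmetry interpretation via (\ref{eigenvector-1})--(\ref{eigenvector-2}) guides which vectors to test).

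For the first pair in (\ref{subspace-1}), I would proceed as follows. The vector $(U_\omega',\bar U_\omega')^T$ in $\ker L_+$ comes from space translation invariance of the soliton profile: differentiating (\ref{ODE}) once in $x$ produces a linear equation for $U_\omega'$ whose components, after using (\ref{ODE}) to eliminate lower-order pieces, should reproduce exactly the action of $\ell_+$ on $U_\omega'$ together with the off-diagonal $-6\omega U_\omega^2 \bar U_\omega'$ term. The key manipulation is to repeatedly substitute $i U_\omega' = \omega U_\omega - \bar U_\omega + 2|U_\omega|^2 U_\omega$ (and its conjugate) to convert every first-derivative occurrence into algebraic expressions in $U_\omega,\bar U_\omega$, and to compute $U_\omega''$ by differentiating once more. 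For the vector $(U_\omega,-\bar U_\omega)^T$ in $\ker L_-$, which reflects the gauge invariance, the relevant identity is obtained by substituting $U_\omega$ (rather than its derivative) into $\ell_-$ and the off-diagonal $2\omega U_\omega^2$ term and checking the cancellation, again using (\ref{ODE}) to replace $U_\omega'$ and $U_\omega''$.

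For the second pair (\ref{subspace-2}), valid only at $\omega=0$, I expect the mechanism to be the extra symmetry present at the band center. At $\omega=0$ the profile equation (\ref{ODE}) simplifies and $U_0$ acquires additional structure (in particular $U_0$ becomes real up to a uniform phase, since the $\sinh$ term in (\ref{MTM-form}) vanishes when $\sqrt{1-\omega}=\sqrt{1+\omega}$). I would first record the explicit reductions $U_0 = \operatorname{sech}(x)$-type profile and the simplified ODE $iU_0' + \bar U_0 = 2|U_0|^2 U_0$, then verify that $(U_0',-\bar U_0')^T \in \ker L_+$ and $(U_0,\bar U_0)^T \in \ker L_-$ by the same substitution procedure, tracking carefully that the sign flips in these vectors are precisely what make the off-diagonal terms cancel when $\omega=0$.

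The main obstacle is purely computational bookkeeping: the operators $\ell_\pm$ contain a second derivative, a first-derivative term with variable coefficient $|U_\omega|^2$, and several quartic and quadratic algebraic terms, so each verification requires computing $U_\omega''$ and systematically reducing every derivative via (\ref{ODE}) without sign or factor errors. The delicate point is the interplay between the off-diagonal coupling ($-6\omega U_\omega^2$ for $L_+$, $2\omega U_\omega^2$ for $L_-$) and the $-6\omega|U_\omega|^2$ or $-2\omega|U_\omega|^2$ diagonal terms: these $\omega$-proportional contributions must cancel against the $\omega$-terms generated when eliminating $U_\omega'$ from the derivative terms. I would organize the computation by collecting coefficients of $U_\omega$, $\bar U_\omega$, $|U_\omega|^2 U_\omega$, $U_\omega^3$, $|U_\omega|^4 U_\omega$, and their conjugate analogues separately, and check that each collected coefficient vanishes; the second pair at $\omega=0$ should then follow by inspection once the general $\omega$ computation is in hand, since setting $\omega=0$ removes the off-diagonal terms' asymmetry.
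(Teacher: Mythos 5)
Your treatment of (\ref{subspace-1}) matches the paper's proof exactly: the paper also verifies these kernel relations by direct substitution using the first-order equation (\ref{ODE}) together with the second-order equation (\ref{ODE-second-order}) (which is what you would reconstruct by differentiating (\ref{ODE}) and eliminating derivatives), and it likewise treats the symmetry interpretation via (\ref{eigenvector-1}) and (\ref{eigenvector-2}) as motivation rather than as the proof. For (\ref{subspace-2}), however, your plan rests on a concrete factual error: the $\sinh$ term in (\ref{MTM-form}) does \emph{not} vanish at $\omega=0$. Setting $\omega=0$ gives $U_0(x)=\left(\cosh x+i\sinh x\right)^{-1}$, so $|U_0|^2=1/\cosh(2x)$ but the phase of $U_0$ is genuinely $x$-dependent; $U_0$ is not a real sech-type profile up to a uniform phase. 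A verification organized around a real $U_0$ would wrongly discard the terms $-3U_0^2+3\bar{U}_0^2$ in $\ell_+$ and $-U_0^2+\bar{U}_0^2$ in $\ell_-$, which are nonzero, and the bookkeeping you describe would not close.

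The repair is immediate and is the paper's own (one-line) argument, which you gesture at but do not isolate: the off-diagonal entries of $L_\pm$ in (\ref{Hessian-plus}) are $-6\omega U_\omega^2$ and $2\omega U_\omega^2$, which vanish \emph{identically} at $\omega=0$, so $L_\pm$ become diagonal there. Then (\ref{subspace-1}) at $\omega=0$ reads componentwise as $\ell_+ U_0'=0$, $\bar{\ell}_+\bar{U}_0'=0$ and $\ell_- U_0=0$, $\bar{\ell}_-\bar{U}_0=0$, and flipping the sign of the second component trivially stays in the kernel — no fresh computation with the explicit $\omega=0$ profile is needed. Your phrasing that the sign flips ``make the off-diagonal terms cancel'' is misleading on both sides: at $\omega=0$ there is nothing to cancel (the off-diagonal entries are zero), while for $\omega\neq 0$ the sign-flipped vectors are \emph{not} kernel elements — the corresponding eigenvalue moves off zero, which is exactly the splitting quantified in Lemmas \ref{lemma-eigenvalues-1} and \ref{lemma-eigenvalues-2}, so any reading of your cancellation claim beyond $\omega=0$ would contradict those lemmas.
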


\begin{proof}
Validity of (\ref{subspace-1}) for any $\omega \in (-1,1)$
is obtained by direct substitution with the help of the differential equations (\ref{ODE}) and (\ref{ODE-second-order}).
Eigenvectors (\ref{eigenvector-1}) and (\ref{eigenvector-2}) are related to
the physical symmetries of the MTM system (\ref{MTM}) with respect
to the gauge and space translations.

Because operators $L_{\pm}$ are diagonal for $\omega = 0$, the existence of the eigenvectors (\ref{subspace-2})
follows from the existence of the eigenvectors (\ref{subspace-1}) for $\omega = 0$.
\end{proof}

\begin{lemma}
For any $\omega \in (-1,1)$, operator $L_-$ has exactly two eigenvalues below the continuous spectrum. Besides the zero eigenvalue
associated with the eigenvectors (\ref{subspace-1}), $L_-$ also has a positive eigenvalue for
$\omega \in (0,1)$ and a negative eigenvalue for $\omega \in (-1,0)$, which is
associated with the eigenvector in (\ref{subspace-2}) for $\omega = 0$.
\label{lemma-eigenvalues-1}
\end{lemma}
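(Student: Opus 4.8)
The plan is to treat $L_-$ as an analytic family of self-adjoint operators in the parameter $\omega$ and to locate its discrete spectrum by an explicit base-case computation at $\omega=0$, followed by perturbation and a global continuation argument. First I would record that on the invariant subspace of vectors of the form $(p,\bar{p})$ — on which $L_-$ genuinely acts, as explained before (\ref{Hessian-plus}) — the operator is self-adjoint with respect to the real inner product inherited from the orthogonal block-diagonalization $S^T L S$, so its spectrum is real, while by Weyl's lemma the essential spectrum is $[1-\omega^2,\infty)$. Counting eigenvalues below the threshold $1-\omega^2$ is therefore a well-posed discrete problem, and the zero eigenvalue with eigenvector $(U_\omega,-\bar{U}_\omega)$ from (\ref{subspace-1}) is present for every $\omega\in(-1,1)$.

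At $\omega=0$ the off-diagonal entries of $L_-$ in (\ref{Hessian-plus}) vanish, so the eigenvalue problem reduces to the scalar equation $\ell_-|_{\omega=0}\,p=\lambda p$. I would remove the first-order term by the gauge substitution $p=e^{i\Theta}g$ with $\Theta'=-|U_0|^2=-\mathrm{sech}(2x)$; using the identity $(\cosh x+i\sinh x)^2=1+i\sinh 2x$, a direct computation collapses the complex potential to a real reflectionless one, giving $-g''+\bigl(1-3\,\mathrm{sech}^2(2x)\bigr)g=\lambda g$. After rescaling this is a P\"oschl--Teller operator with coupling $\ell(\ell+1)=3/4$, i.e. $\ell=1/2$, which has exactly one bound state, located precisely at $\lambda=0$ and carrying the eigenfunction corresponding to $U_0$. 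Hence $\ell_-|_{\omega=0}$ has a single eigenvalue below its continuous spectrum, and the block-diagonal $L_-|_{\omega=0}$ therefore has exactly two eigenvalues below $1$, both equal to zero, with eigenvectors $(U_0,\pm\bar{U}_0)$ as in (\ref{subspace-1})--(\ref{subspace-2}).

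Next I would apply analytic perturbation theory: the double zero eigenvalue splits into two branches analytic in $\omega$. One stays pinned at $0$ because $(U_\omega,-\bar{U}_\omega)$ is an exact kernel vector of $L_-$ for all $\omega$ by (\ref{subspace-1}); I would use this to show that in the degenerate $2\times 2$ reduction on the kernel the protected vector is a zero eigenvector, so the slope of the moving branch is the single Rayleigh quotient $\mu'(0)=\langle\psi_+,\partial_\omega L_-|_{\omega=0}\,\psi_+\rangle/\langle\psi_+,\psi_+\rangle$ with $\psi_+=(U_0,\bar{U}_0)$. The explicit part of $\partial_\omega L_-|_{\omega=0}$ is read off from (\ref{Hessian-plus}) as the matrix with diagonal $-2|U_0|^2$ and off-diagonal $2U_0^2,\,2\bar{U}_0^2$; the implicit part, carrying $\partial_\omega U_\omega|_{\omega=0}$, is evaluated by differentiating the first-order equation (\ref{ODE}) in $\omega$ and integrating by parts against $U_0$. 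The goal is to establish $\mu'(0)>0$, which yields a positive eigenvalue for small $\omega>0$ and a negative one for small $\omega<0$, as claimed.

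Finally I would upgrade the local statement to all $\omega\in(-1,1)$. The integer count of eigenvalues below the moving threshold $1-\omega^2$ can change only through a threshold bifurcation; I would rule out both the loss of a discrete eigenvalue into the essential spectrum and the emergence of a new one by excluding threshold resonances, using the exponential decay of $U_\omega$ from (\ref{MTM-form}) together with the explicit structure of the potentials, so that exactly two eigenvalues persist for every $\omega$. That the moving eigenvalue returns to zero only at $\omega=0$ follows from simplicity of $\ker L_-$ for $\omega\neq 0$: a second decaying solution of the $\lambda=0$ linearized system would contradict a Wronskian computation based on the known decaying solution $U_\omega$, whose companion solution grows at infinity. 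Combined with $\mu'(0)>0$ and continuity, this pins $\mathrm{sgn}\,\mu(\omega)=\mathrm{sgn}\,\omega$ throughout. The two steps I expect to be genuinely delicate are the sign of the perturbative slope $\mu'(0)$ — because of the implicit $\partial_\omega U_\omega$ contribution, whose treatment requires the identity obtained by differentiating (\ref{ODE}) — and the global exclusion of threshold bifurcations near the endpoints $\omega\to\pm 1$, where the threshold $1-\omega^2$ and the soliton amplitude both tend to zero.
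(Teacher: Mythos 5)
Your base case at $\omega=0$ is correct (the scalar reduction to $-g''+(1-3\,{\rm sech}^2(2x))g=\lambda g$, one bound state exactly at $\lambda=0$; though note the P\"oschl--Teller potential with $\ell=1/2$ is \emph{not} reflectionless --- harmless here), and your reduction of the degenerate splitting to the single Rayleigh quotient at $(U_0,\bar{U}_0)$ is legitimate, since differentiating $L_-(U_\omega,-\bar{U}_\omega)=0$ in $\omega$ shows the splitting matrix is diagonal with one zero entry. But two steps are genuinely missing, and the second is the actual content of the lemma. First, $\mu'(0)>0$ is announced as a ``goal,'' not established; the implicit $\partial_\omega U_\omega$ contribution is nontrivial and you give no mechanism to evaluate its sign. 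Second, and decisively, your global continuation step fails as described: you propose to exclude threshold bifurcations ``using the exponential decay of $U_\omega$ together with the explicit structure of the potentials,'' but exponential decay of a potential does \emph{not} prevent eigenvalues from emerging out of the band edge in one dimension --- arbitrarily weak attractive perturbations of $-\partial_z^2$ create bound states, so as $\omega$ sweeps $(-1,1)$ (and the threshold $1-\omega^2$ and soliton amplitude degenerate near $\omega\to\pm1$) nothing in your argument keeps the count at exactly two. Likewise, your continuity argument only shows the moving eigenvalue cannot cross zero; it does not prevent it from being absorbed into the continuous spectrum, which would leave one eigenvalue instead of two and falsify the statement. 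The quantitative eigenvalue count at arbitrary $\omega$ is precisely what you have deferred.

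The paper avoids perturbation and continuation entirely for $L_-$, thanks to an identity you did not exploit: after the gauge transformation $u=\varphi\,e^{-i\int_0^x|U_\omega|^2}$, the off-diagonal entry becomes \emph{real} for every $\omega$, because $U_\omega^2 e^{2i\int_0^x |U_\omega(x')|^2 dx'}=|U_\omega|^2$. Hence the system decouples via $\psi_\pm=\varphi\pm\bar\varphi$ into the two explicit scalar problems (\ref{first}) and (\ref{second}) for \emph{all} $\omega\in(-1,1)$, not just $\omega=0$. The eigenvalue count is then done by Sturm oscillation theory: the kernel eigenfunction $\psi_0=(\omega+\cosh 2z)^{-1/2}$ is positive, so zero is the ground state of (\ref{second}); explicitly solvable comparison problems with endpoint resonances having exactly one zero, namely $\psi_c=\sinh(2z)/(\omega+\cosh 2z)$ for (\ref{second-compared}) and, after the pointwise bound $\omega+\cosh(2z)\ge \omega+1+2z^2$, $\tilde\psi_c=y/\sqrt{1+y^2}$ for (\ref{first-compared-new-yet}), combine with Sturm comparison (Theorems A and B) to give \emph{exactly one} isolated eigenvalue in each scalar problem for every $\omega$; and the sign of the nonzero eigenvalue follows from the sign-definite potential difference $4\omega/(\omega+\cosh 2z)$ via Theorem C. This is exactly the quantitative mechanism your proposal lacks (it is also why the paper's $L_-$ result holds on all of $(-1,1)$ while the $L_+$ result in Lemma \ref{lemma-eigenvalues-2}, where the analogous identity fails and perturbation theory \emph{is} used, holds only for small $|\omega|$). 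To repair your proof you would need either this decoupling or some substitute device that counts eigenvalues and excludes threshold resonances uniformly in $\omega$.
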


\begin{proof}
Let us consider the eigenvalue
problem $L_- {\bf u} = \mu {\bf u}$, where ${\bf u} = (u,\bar{u})$
is an eigenvector and $\mu$ is the spectral parameter. Using the transformation
$$
u(x) = \varphi(x) e^{-i \int_0^x |U_{\omega}(x')|^2 dx'}
$$
where $\varphi$ is a new eigenfunction, we obtain an equivalent spectral problem:
$$
\left[ \begin{array}{cc} -\partial_x^2 + 1-\omega^2 -2 \omega |U_{\omega}|^2 - 3 |U_{\omega}|^4 & 2 \omega |U_{\omega}|^2 \\
2 \omega |U_{\omega}|^2 & -\partial_x^2 + 1-\omega^2 -2 \omega |U_{\omega}|^2 - 3 |U_{\omega}|^4 \end{array} \right]
\left[ \begin{array}{c} \varphi \\ \bar{\varphi} \end{array} \right] = \mu \left[ \begin{array}{c} \varphi \\ \bar{\varphi} \end{array} \right],
$$
thanks to the fact that
$$
U_{\omega}^2 e^{2 i \int_0^x |U_{\omega}(x')|^2 dx'} = \frac{1 - \omega^2}{\omega + \cosh(2 \sqrt{1-\omega^2} x)} = |U_{\omega}|^2.
$$

Because the off-diagonal entries are real, we set
$$
\psi_{\pm} := \varphi(x) \pm \bar{\varphi}(x), \quad
z := \sqrt{1-\omega^2} x, \quad
\mu := (1-\omega^2) \lambda
$$
to diagonalize the spectral problem into two uncoupled spectral problems
associated with the linear Schr\"{o}dinger operators:
\begin{equation}
\label{first}
- \frac{d^2 \psi_+}{d z^2} + \left[ 1 - \frac{3 (1-\omega^2)}{(\omega + \cosh(2z))^2} \right] \psi_+ = \lambda \psi_+
\end{equation}
and
\begin{equation}
\label{second}
- \frac{d^2 \psi_-}{d z^2} + \left[ 1 - \frac{3 (1-\omega^2)}{(\omega + \cosh(2z))^2} - \frac{4 \omega}{\omega + \cosh(2z)}
\right] \psi_- = \lambda \psi_-.
\end{equation}
The eigenvector (\ref{subspace-1}) in the kernel of $L_-$ yields the eigenfunction
$$
\psi_0(z) = \frac{1}{(\omega + \cosh(2z))^{1/2}}
$$
of the spectral problem (\ref{second}) for $\lambda = 0$.
Because the eigenfunction $\psi_0$ is positive definite, the simple zero eigenvalue of
the spectral problem (\ref{second}) is at the bottom of the Schr\"{o}dinger
spectral problem for any $\omega \in (-1,1)$, by Sturm's Nodal Theorem (Theorem A in Appendix B).
Furthermore, the function
$$
\psi_c(z) = \frac{\sinh(2z)}{\omega + \cosh(2z)}
$$
corresponds to the end-point resonance at $\lambda = 1$ for the spectral problem
\begin{equation}
\label{second-compared}
- \frac{d^2 \psi}{d z^2} + \left[ 1 - \frac{8 (1-\omega^2)}{(\omega + \cosh(2z))^2} - \frac{4 \omega}{\omega + \cosh(2z)}
\right] \psi = \lambda \psi.
\end{equation}
Because the function $\psi_c$ has exactly one zero, there is only one isolated eigenvalue
below the continuous spectrum for the spectral problem (\ref{second-compared}) (Theorem A in Appendix B).
Now the difference between the potentials of the spectral problems (\ref{second}) and (\ref{second-compared}) is
$$
\Delta V(z) =  \frac{5 (1-\omega^2)}{(\omega + \cosh(2z))^2},
$$
where $\Delta V > 0$  for all $z \in \R$ and $\omega \in (-1,1)$. By Sturm's Comparison Theorem
(Theorem B in Appendix B), a solution of the spectral problem (\ref{second}) for $\lambda = 1$,
which is bounded as $z \to -\infty$, has exactly one zero. Therefore,
the spectral problem (\ref{second}) has exactly one isolated eigenvalue
for all $\omega \in (-1,1)$ and this is the zero eigenvalue with the
eigenfunction $\psi_0$.

The difference between the potentials of the spectral problems (\ref{first}) and (\ref{second}) is
given by
$$
\Delta V(z) = \frac{4 \omega}{\omega + \cosh(2z)}.
$$
Since $\Delta V > 0$ for $\omega \in (0,1)$, the spectral problem (\ref{first})
has precisely one isolated eigenvalue for $\omega \in (0,1)$ (Theorem B in Appendix B)
and this eigenvalue is positive (Theorem C in Appendix B).
On the other hand, since $\Delta V < 0$ for $\omega \in (-1,0)$
and $\psi_0 > 0$ is an eigenfunction of the spectral problem (\ref{second})
for $\lambda = 0$, the spectral problem (\ref{first}) has at least one negative
eigenvalue for $\omega \in (-1,0)$ (Theorem C in Appendix B). To show that this negative eigenvalue
is the only isolated eigenvalue of the spectral problem (\ref{first}),
we note that
$$
\omega + \cosh(2z) \geq \omega + 1 + 2 z^2, \quad z \in \R
$$
and consider the spectral problem
\begin{equation}
\label{first-compared}
- \frac{d^2 \psi}{d z^2} + \left[ 1 - \frac{3 (1-\omega^2)}{(\omega + 1 + 2 z^2)^2} \right] \psi = \lambda \psi.
\end{equation}
Rescaling the independent variable $z := \frac{\sqrt{1+\omega}}{\sqrt{2}} y$ and denoting $\psi(z) := \tilde{\psi}(y)$,
we rewrite (\ref{first-compared}) in the equivalent form
\begin{equation}
\label{first-compared-new}
- \frac{d^2 \tilde{\psi}}{d y^2} - \frac{3}{(1 + y^2)^2} \left(1 - \frac{1 + \omega}{2} \right) \tilde{\psi} =
\frac{(\lambda - 1) (1 + \omega)}{2} \tilde{\psi}.
\end{equation}
It follows that the function
$$
\tilde{\psi}_c(y) = \frac{y}{\sqrt{1 + y^2}}
$$
corresponds to the end-point resonance at $\lambda = 1$ for the spectral problem
\begin{equation}
\label{first-compared-new-yet}
- \frac{d^2 \tilde{\psi}}{d y^2} - \frac{3}{(1 + y^2)^2} \tilde{\psi} =
\frac{(\lambda - 1) (1 + \omega)}{2} \tilde{\psi}.
\end{equation}
Because the function $\tilde{\psi}_c$ has exactly one zero, there is only one isolated eigenvalue
below the continuous spectrum for the spectral problem (\ref{first-compared-new-yet}).
Because the difference between potentials of the spectral problems (\ref{first-compared-new}) and (\ref{first-compared-new-yet})
as well as those of the spectral problems (\ref{first}) and (\ref{first-compared}) is strictly positive
for all $\omega \in (-1,1)$, by Theorem B in Appendix B,
the spectral problem (\ref{first}) has exactly one isolated eigenvalue
for all $\omega \in (-1,1)$ and this eigenvalue is negative for $\omega \in (-1,0)$,
zero for $\omega = 0$, and positive for $\omega \in (0,1)$.
\end{proof}

\begin{lemma}
There is $\omega_0 \in (0,1]$ such that for any fixed $\omega \in (-\omega_0,\omega_0)$,
operator $L_+$ has exactly two eigenvalues below the continuous spectrum. Besides the zero eigenvalue
associated with the eigenvector in (\ref{subspace-1}), $L_+$ also has a negative eigenvalue for
$\omega \in (0,\omega_0)$ and a positive eigenvalue for $\omega \in (-\omega_0,0)$, which is
associated with the eigenvector in (\ref{subspace-2}) for $\omega = 0$.
\label{lemma-eigenvalues-2}
\end{lemma}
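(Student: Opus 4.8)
The plan is to treat $L_+$ perturbatively in a neighborhood of $\omega = 0$, rather than by the global Sturm comparison used for $L_-$ in Lemma \ref{lemma-eigenvalues-1}. The reason is structural and explains why only a local $\omega_0$ is expected: the gauge substitution $u(x) = \varphi(x)\,e^{-3i\int_0^x |U_\omega|^2\,dx'}$ that removes the first-order term from $\ell_+$ simultaneously turns the off-diagonal entry $-6\omega U_\omega^2$ into $-6\omega|U_\omega|^2 e^{4i\int_0^x|U_\omega|^2\,dx'}$, whose surviving complex phase obstructs the reduction of $L_+$ to a pair of real Schr\"{o}dinger operators. Since the clean global argument is unavailable, I would anchor everything at $\omega=0$, where the off-diagonal entries vanish.

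First I would treat the base case $\omega = 0$. There $L_+(0)$ is block-diagonal, and applying the above gauge transformation together with (\ref{ODE}) and the identity $(|U_\omega|^2)' = i(\bar U_\omega^2 - U_\omega^2)$ reduces $\ell_+$ to the P\"{o}schl--Teller operator $-\partial_x^2 + 1 - 3\,\mathrm{sech}^2(2x)$ (recall $|U_0|^2 = \mathrm{sech}(2x)$). This operator has exactly one bound state, at $\lambda = 0$ with ground state $\psi_0 = \mathrm{sech}^{1/2}(2x)$, as one reads off either from the explicit P\"{o}schl--Teller spectrum or from the Sturm argument already used for (\ref{second}). Hence $L_+(0)$ has a double eigenvalue at $0$ at the bottom of its spectrum, below the continuous spectrum $[1,\infty)$, and nothing else below. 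I would then propagate the count: $\omega\mapsto L_+(\omega)$ is an analytic family whose essential spectrum stays at $[1-\omega^2,\infty)$ by Weyl's lemma, so by analytic perturbation theory the double eigenvalue at $0$ persists as exactly two eigenvalues near $0$ for $|\omega|<\omega_0$, while the absence of extra eigenvalues entering the gap follows from $L_+(0)$ having neither an eigenvalue nor a resonance at its threshold $\lambda=1$. By the Proposition, $(U_\omega',\bar U_\omega')$ stays in the kernel for every $\omega$ (see (\ref{subspace-1})), so one eigenvalue is pinned at $0$; it remains to sign the other.

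For the second eigenvalue I would apply degenerate first-order perturbation theory to the two-dimensional kernel at $\omega=0$. The key point is that $L_+$ commutes with the antilinear involution $J(a,b)=(\bar b,\bar a)$, and multiplication by $i$ exchanges the $J$-even and $J$-odd real subspaces; tracking this carefully shows that, in the gauge variable $\varphi$, the pinned direction is the real ground state $\psi_0$ while the splitting direction is $i\psi_0$. Using $\partial_\omega|U_\omega|^2|_{\omega=0} = -|U_0|^4$, the slope of the splitting eigenvalue then reduces to
\[
\mu'(0) = \frac{1}{\|\psi_0\|_{L^2}^2}\left[ \int_{\R} \left( 6|U_0|^8 - 6|U_0|^4 \right) dx - 6\int_{\R} |U_0|^4 \cos\left( 4\int_0^x |U_0|^2\, dx' \right) dx \right].
\]
Evaluating the oscillatory integral through the Gudermannian identity $\cos\!\big(4\int_0^x|U_0|^2\big) = 2\,\mathrm{sech}^2(2x) - 1$ yields $\mu'(0) = -4/\|\psi_0\|_{L^2}^2 < 0$, so the non-pinned eigenvalue is negative for $\omega\in(0,\omega_0)$ and positive for $\omega\in(-\omega_0,0)$, which is the assertion. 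A reassuring consistency check is that the same formula evaluated on the pinned direction $\psi_0$ returns $0$.

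The step I expect to be the main obstacle is exactly this sign computation. The delicate issue is the degenerate-perturbation bookkeeping: the kernel at $\omega=0$ is genuinely two-dimensional and carries the complex structure above, so a careless choice of zeroth-order eigenvector interchanges the pinned and splitting modes and flips the relative sign of the diagonal and coupling contributions (the pinned mode gives $-2+2 = 0$, the splitting mode gives $-2-2 = -4$). Correctly isolating the splitting mode as $i\psi_0$ and evaluating the phase integral are the crux of the argument; the clean eigenvalue count for small $\omega$, i.e. ruling out bifurcation from the edge of the essential spectrum, is a secondary technical point settled by the threshold analysis of $L_+(0)$.
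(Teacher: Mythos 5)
Your proposal is correct and follows essentially the same route as the paper's proof: gauge out the first-order term, observe that at $\omega=0$ the problem reduces to the scalar operator $-\partial_x^2+1-3\,\mathrm{sech}^2(2x)$ with a single bound state $\psi_0=\mathrm{sech}^{1/2}(2x)$ at $\lambda=0$ and no threshold resonance, then split the isolated double zero eigenvalue by Kato's perturbation theory with the sign of the non-pinned eigenvalue read off from the first-order quadratic form --- and your slope $\mu'(0)=-4/\|\psi_0\|_{L^2}^2<0$ agrees with the lemma and in fact fixes a sign slip in the paper's own proof, whose displayed integral $\int_{\R}(-3+2\omega^2+\cosh(4z))\,(\omega+\cosh(2z))^{-4}\,dz$ equals $-2/3$ at $\omega=0$ (not positive, compensating a sign in the prefactor) and whose concluding sentence states the opposite signs to the lemma itself. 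The only blemish is a harmless label swap: relative to the paper's explicit kernel eigenfunction $\varphi_0|_{\omega=0}=i\,\mathrm{sech}^{1/2}(2z)$ (the gauge image of $U_0'$), the pinned direction is $i\psi_0$ and the splitting direction is the real $\psi_0$, which is precisely the mode on which your own arithmetic is actually evaluated (the coupling term $-6\int_{\R}|U_0|^4\cos\bigl(4\int_0^x|U_0|^2\,dx'\bigr)\,dx=-2$ reinforcing the diagonal $-2$ to give $-4$, while the mode with the cancellation $-2+2=0$ is the pinned one), so your conclusion stands.
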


\begin{proof}
Because the double zero eigenvalue of $L_+$ at $\omega = 0$ is isolated from the continuous spectrum
located for $[1,\infty)$, the assertion of the lemma will follow by the Kato's perturbation theory
\cite{Kato} if we can show that the zero eigenvalue is the lowest eigenvalue of $L_+$ at $\omega = 0$
and the end-point of the continuous spectrum does not admit a resonance.

To develop the perturbation theory, we consider the eigenvalue
problem $L_+ {\bf u} = \mu {\bf u}$, where ${\bf u} = (u,\bar{u})$
is an eigenvector and $\mu$ is the spectral parameter. Using the transformation
$$
u(x) = \varphi(x) e^{- 3 i \int_0^x |U_{\omega}(x')|^2 dx'}
$$
where $\varphi$ is a new eigenfunction, we obtain an equivalent spectral problem:
$$
\left[ \begin{array}{cc} -\partial_x^2 + 1-\omega^2 -6 \omega |U_{\omega}|^2 - 3 |U_{\omega}|^4 & -6 \omega W \\
-6 \omega \bar{W} & -\partial_x^2 + 1-\omega^2 -6 \omega |U_{\omega}|^2 - 3 |U_{\omega}|^4 \end{array} \right]
\left[ \begin{array}{c} \varphi \\ \bar{\varphi} \end{array} \right] = \mu \left[ \begin{array}{c} \varphi \\ \bar{\varphi} \end{array} \right],
$$
where
\begin{eqnarray*}
W & = & U_{\omega}^2 e^{6 i \int_0^x  |U_{\omega}(x')|^2 dx'} \\
& = & (1-\omega^2)
\frac{\left( 1 + \omega \cosh\left(2 \sqrt{1 - \omega^2} x\right) +
i \sqrt{1-\omega^2} \sinh\left(2 \sqrt{1 - \omega^2} x\right)\right)^2}{
\left( \omega + \cosh\left( 2 \sqrt{1 - \omega^2} x\right) \right)^3}.
\end{eqnarray*}
Setting now $z := \sqrt{1-\omega^2} x$ and $\mu := (1-\omega^2) \lambda$, we rewrite
the spectral problem in the form
\begin{equation}
\label{third}
\left[ \begin{array}{cc} -\partial_z^2 + 1 + V_1(z) & V_2(z) \\
\bar{V}_2(z) & -\partial_z^2 + 1 + V_1(z) \end{array} \right] \left[ \begin{array}{c} \varphi \\ \bar{\varphi} \end{array} \right]
= \lambda \left[ \begin{array}{c} \varphi \\ \bar{\varphi} \end{array} \right],
\end{equation}
where
$$
V_1(z) := -\frac{3(1-\omega^2)}{(\omega + \cosh(2z))^2}-\frac{6 \omega}{\omega + \cosh(2z)}
$$
and
$$
V_2(z) := - 6 \omega \frac{\left(1 + \omega \cosh(2z) +
i \sqrt{1-\omega^2} \sinh(2 z)\right)^2}{\left( \omega + \cosh(2z) \right)^3}.
$$
The eigenvector (\ref{subspace-1}) in the kernel of $L_+$ yields the eigenvector
$(\varphi_0,\bar{\varphi}_0)$ with
$$
\varphi_0(z) = \frac{\omega \sinh(2 z) + i \sqrt{1-\omega^2} \cosh(2z)}{(\omega + \cosh(2z))^{3/2}},
$$
which exists in the spectral problem (\ref{third}) with $\lambda = 0$
for all $\omega \in (-1,1)$. Now, for $\omega = 0$, $\lambda = 0$ is a double
zero eigenvalue of the spectral problem (\ref{third}). The other eigenvector
is $(\varphi_0,-\bar{\varphi}_0)$ and it corresponds to the eigenvector in (\ref{subspace-2}).
The end-point $\lambda = 1$ of the continuous spectrum of the spectral problem (\ref{third})
does not admit a resonance
for $\omega = 0$, which follows from the comparison results in Lemma \ref{lemma-eigenvalues-1}.
No other eigenvalues exist for $\omega = 0$.

To study the splitting of the double zero eigenvalue
if $\omega \neq 0$, we compute the quadratic form of the operator on the left-hand side
of the spectral problem (\ref{third}) at the vector $(\varphi_0,-\bar{\varphi}_0)$ to obtain
$$
-2 \int_{\mathbb{R}} \left( V_2 + \bar{V}_2 \right) |\varphi_0|^2 dz = - 12 \omega \int_{\R}
\frac{-3 + 2 \omega^2 + \cosh(4z)}{(\omega + \cosh(2z))^4} dz.
$$
Since the integral is positive for $\omega = 0$, Kato's perturbation theory (Theorem D in Appendix B)
implies that the zero eigenvalue of the spectral problem (\ref{third}) becomes negative for $\omega < 0$
and positive for $\omega > 0$ with sufficiently small $|\omega|$.
\end{proof}

\begin{conjecture}
The spectral problem (\ref{third}) has exactly two isolated eigenvalues
and no end-point resonances for all $\omega \in (-1,1)$.
The non-zero eigenvalue is positive for all $\omega \in (-1,0)$
and negative for all $\omega \in (0,1)$.
\end{conjecture}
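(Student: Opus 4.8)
The plan is to extend the perturbative result of Lemma~\ref{lemma-eigenvalues-2} from a neighborhood of $\omega = 0$ to the full interval $\omega \in (-1,1)$ by a continuation argument in $\omega$, using the fact that the isolated eigenvalues of the self-adjoint problem (\ref{third}) depend continuously (indeed real-analytically, by Kato's theory) on $\omega$ as long as they stay separated from the threshold $\lambda = 1$. First I would recast (\ref{third}) as a real symmetric matrix Schr\"odinger operator on the pair $(p,q) = (\mathrm{Re}\,\varphi, \mathrm{Im}\,\varphi)$; writing $V_2 = V_2^r + i V_2^i$, the system becomes
\begin{equation*}
\left[ \begin{array}{cc} -\partial_z^2 + 1 + V_1 + V_2^r & V_2^i \\ V_2^i & -\partial_z^2 + 1 + V_1 - V_2^r \end{array} \right]
\left[ \begin{array}{c} p \\ q \end{array} \right] = \lambda \left[ \begin{array}{c} p \\ q \end{array} \right],
\end{equation*}
with essential spectrum $[1,\infty)$ and kernel at $\lambda = 0$ spanned by $(p_0,q_0)$ read off from the explicit eigenvector $\varphi_0$. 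Unlike the operator $L_-$ of Lemma~\ref{lemma-eigenvalues-1}, this system does \emph{not} decouple into two scalar Schr\"odinger problems, since the coupling $V_2^i$ does not vanish; this is the source of the difficulty and the reason the scalar Sturm comparison theory used for $L_-$ is unavailable here.

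With the count of isolated eigenvalues at $\omega = 0$ fixed at two (a double zero eigenvalue at the bottom of the spectrum, with no threshold resonance, as recorded in the proof of Lemma~\ref{lemma-eigenvalues-2}), the continuation argument requires two global facts. The first is the \textbf{persistence of the eigenvalue count}: the number of eigenvalues below $\lambda = 1$ can change only when an eigenvalue crosses the threshold, which requires an end-point resonance. I would therefore prove the ``no end-point resonance'' part of the conjecture directly, by analyzing the behavior at $\lambda = 1$ of the solutions of the $2 \times 2$ system above as $z \to \pm\infty$ and showing that no nontrivial bounded, non-decaying solution exists for any $\omega \in (-1,1)$. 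The second is the \textbf{simplicity of the zero eigenvalue for $\omega \neq 0$}: since $(p_0,q_0)$ lies in the kernel for every $\omega$, I must show that this kernel is exactly one-dimensional away from $\omega = 0$, so that the second eigenvalue $\lambda_1(\omega)$ branching off the double zero, with $\lambda_1'(0) \neq 0$ of the sign computed in Lemma~\ref{lemma-eigenvalues-2}, never returns to zero. Equivalently, using $\varphi_0$ together with reduction of order, one identifies the second solution of the $\lambda = 0$ problem and checks that it is unbounded at infinity.

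Granting these two facts, the argument closes as follows. By the first, the total number of isolated eigenvalues stays equal to two for all $\omega \in (-1,1)$; by the second, one of them is always the simple zero eigenvalue and the other, $\lambda_1(\omega)$, is nonzero and continuous in $\omega$, hence keeps a constant sign on each of the intervals $(-1,0)$ and $(0,1)$. Matching these signs with the perturbative computation near $\omega = 0$ yields $\lambda_1(\omega) > 0$ for $\omega \in (-1,0)$ and $\lambda_1(\omega) < 0$ for $\omega \in (0,1)$, which is precisely the claim. The main obstacle is exactly the pair of global statements above: without the scalar reduction available for $L_-$, ruling out threshold resonances and a reappearance of the double zero eigenvalue over the entire parameter range is delicate. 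A natural rigorous substitute for the hand-counting is an Evans-function (or Wronskian) formulation, analytic in both $\lambda$ and $\omega$, whose zeros in $(-\infty,1)$ count the eigenvalues; the conjecture then reduces to showing that this function has exactly two zeros on $(-\infty,1)$ and none accumulating at the edge for every $\omega$, for which a matrix (renormalized) oscillation theory or a Maslov-index computation appears to be the right tool. I expect this threshold-and-kernel analysis to be the crux, and it is the reason the statement is left as a conjecture.
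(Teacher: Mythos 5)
You should first be aware that the statement you were asked to prove is labelled a \emph{Conjecture} in the paper: the authors do not prove it, and everything they actually establish (Lemma \ref{lemma-eigenvalues-2} and the constrained analysis that follows) is confined to a perturbative neighborhood $(-\omega_0,\omega_0)$ of $\omega=0$ --- which is precisely why Theorem \ref{theorem-main} carries the restriction $\omega \in (-\omega_0,\omega_0)$. Your write-up must therefore be judged on its own, and judged that way it is a program, not a proof. The continuation logic is sound as far as it goes: your real $2\times 2$ rewriting of (\ref{third}) in terms of $(\mathrm{Re}\,\varphi,\mathrm{Im}\,\varphi)$ is correct, and, \emph{granting} your two ``global facts,'' the sign-tracking argument closes properly, since $\lambda_1(\omega)$ can vanish only by colliding with the persistent zero eigenvalue and can leave $(-\infty,1)$ only through the edge. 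But those two facts --- absence of threshold behavior at $\lambda=1$ for every $\omega\in(-1,1)$, and one-dimensionality of the kernel for $\omega\neq 0$ --- are exactly the nonperturbative content of the conjecture. For $L_-$ the paper disposes of the analogous questions via a gauge transformation that decouples the problem into two scalar Schr\"{o}dinger equations, then compares with explicitly solvable potentials whose resonance functions ($\psi_c$, $\tilde{\psi}_c$) are known; as you yourself observe, no such scalar reduction exists for (\ref{third}) because $V_2^i \not\equiv 0$, and you offer no substitute computation: no construction of the matrix Jost solutions, no verification that an Evans function extends analytically (in the variable $\sqrt{1-\lambda}$) through the branch point at the edge, and no mechanism for counting its zeros uniformly in $\omega$. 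Deferring these to ``matrix oscillation theory or a Maslov-index computation appears to be the right tool'' restates the difficulty rather than resolving it.

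Two smaller cautions if you pursue this route. First, your persistence step must rule out not only bounded non-decaying solutions at $\lambda=1$ but also genuine $L^2$ edge eigenvalues, since an eigenvalue can in principle be absorbed at the threshold through an edge eigenstate rather than a resonance; with exponentially decaying potentials a Jost-function framework handles both at once, but your claim as written covers only resonances. Second, the simplicity of the kernel for $\omega\neq 0$ cannot be settled by scalar reduction of order as you suggest: at $\lambda=0$ the problem is a four-dimensional first-order system, so besides $\varphi_0$ there are three further solutions, and you must show that no other combination decaying at $-\infty$ also decays at $+\infty$ --- again a Wronskian/Evans computation at $\lambda=0$, not a one-line reduction. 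In short, your skeleton is a reasonable plan for settling the statement, but the load-bearing steps are missing, and their absence is exactly why the paper leaves the statement open.
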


\begin{lemma}
There is $\omega_0 \in (0,1]$ such that
for any $\omega \in (-\omega_0,\omega_0)$,
operators $L_{\pm}$ have no negative eigenvalues and a simple zero eigenvalue
in the constrained spaces $X_{\pm}$ defined by
\begin{eqnarray}
\label{constraints-1}
X_+ & := &  \left\{ u \in L^2(\R) : \quad \int_{\R} \left( \bar{U}_{\omega} u + U_{\omega} \bar{u} \right) dx = 0 \right\}, \\
\label{constraints-2}
X_- & := &  \left\{ u \in L^2(\R) : \quad \int_{\R} \left( \bar{U}_{\omega}' u - U_{\omega}' \bar{u} \right) dx = 0 \right\}.
\end{eqnarray}
For operator $L_-$, the result extends to all $\omega \in (-1,1)$.
\label{lemma-constraints}
\end{lemma}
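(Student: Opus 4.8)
The plan is to analyze the scalar operators $L_+$ and $L_-$ separately, regarding each as a real quadratic form on $u\in L^2(\R,\C)$ through the pair $(u,\bar u)$, so that $X_+$ and $X_-$ become single real codimension-one constraints $\langle u,\chi_\pm\rangle=0$ with $\chi_+=U_\omega$, $\chi_-=iU_\omega'$ and $\langle\cdot,\cdot\rangle:=\mathrm{Re}\int_{\R}(\cdot)\overline{(\cdot)}\,dx$. By the Proposition, $\ker L_+=\mathrm{span}\{U_\omega'\}$ and $\ker L_-=\mathrm{span}\{iU_\omega\}$ are simple and separated from the continuous spectrum $[1-\omega^2,\infty)$. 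Since $\langle U_\omega',U_\omega\rangle=\tfrac12\int_{\R}(|U_\omega|^2)'\,dx=0$ and $\langle iU_\omega,iU_\omega'\rangle=\langle U_\omega,U_\omega'\rangle=0$, each zero mode already lies in its constraint space and $\chi_\pm\perp\ker L_\pm$; hence $\chi_\pm$ lies in the (closed) range of $L_\pm$, the preimage $L_\pm^{-1}\chi_\pm$ is defined modulo the kernel, and the simple zero eigenvalue is inherited by $L_\pm|_{X_\pm}$.

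For $\omega\in(-\omega_0,0)$ Lemma~\ref{lemma-eigenvalues-2} gives $L_+\ge0$, and for $\omega\in(0,1)$ Lemma~\ref{lemma-eigenvalues-1} gives $L_-\ge0$; restricting a non-negative form to $X_\pm$ leaves it non-negative with the same simple kernel, so these ranges are immediate. In the complementary ranges, $L_+$ for $\omega\in(0,\omega_0)$ and $L_-$ for $\omega\in(-1,0)$, each operator has exactly one negative eigenvalue. The index formula for a self-adjoint operator restricted to $\{\chi\}^\perp$ (Appendix B, in the spirit of \cite{GSS}) then reduces the whole lemma to showing that the single constraint removes that negative eigenvalue, which happens precisely when the scalar $\langle L_\pm^{-1}\chi_\pm,\chi_\pm\rangle$ is negative.

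The core of the argument is therefore to produce explicit preimages and evaluate these two scalars. For $L_+$ I would differentiate the critical-point identity $\nabla[R+(1-\omega^2)Q](U_\omega)=0$ in $\omega$, obtaining the exact relation $L_+\partial_\omega U_\omega=2\omega U_\omega=2\omega\chi_+$, so that $\langle L_+^{-1}\chi_+,\chi_+\rangle=\tfrac1{2\omega}\langle\partial_\omega U_\omega,U_\omega\rangle=\tfrac1{8\omega}\,\partial_\omega Q$. Using the closed form $Q=2\int_{\R}|U_\omega|^2\,dx=2\arccos\omega$, hence $\partial_\omega Q=-2(1-\omega^2)^{-1/2}<0$, this is negative exactly on $(0,\omega_0)$; this is the Vakhitov--Kolokolov slope condition. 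For $L_-$ I would differentiate the Lorentz-boosted soliton (\ref{Lorentz}) in the velocity at $c=0$: after the similarity transformation $S$ this is a pure $L_-$-block vector proportional to $\eta_0=U_\omega-2i\omega x\,U_\omega$. In the Schr\"odinger reduction of Lemma~\ref{lemma-eigenvalues-1}, $\eta_0$ corresponds to the ground state $\psi_0$ in the symmetric channel (\ref{first}) and to $z\psi_0$ in the antisymmetric channel (\ref{second}); applying the two Schr\"odinger operators to these profiles yields the \emph{same} multiple of the reduced image of $\chi_-$, giving the exact relation $L_-\eta_0=4\omega\,\chi_-$. Then $\langle L_-^{-1}\chi_-,\chi_-\rangle=\tfrac1{4\omega}\langle\eta_0,iU_\omega'\rangle$, and a direct integration using $\int_{\R}|U_\omega|^4\,dx=\sqrt{1-\omega^2}-\omega\arccos\omega$ collapses this to $\langle\eta_0,iU_\omega'\rangle=\sqrt{1-\omega^2}$, so that $\langle L_-^{-1}\chi_-,\chi_-\rangle=\sqrt{1-\omega^2}/(4\omega)<0$ exactly on $(-1,0)$.

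Thus both criteria hold on exactly the ranges where a negative eigenvalue must be removed, and since $\chi_\pm\perp\ker L_\pm$ the index formula returns a non-negative operator with simple zero eigenvalue, which is the lemma ($\omega_0$ inherited from Lemma~\ref{lemma-eigenvalues-2} for $L_+$, the full interval $(-1,1)$ for $L_-$). I expect the main obstacle to be the $L_-$ step: one must check that the boost generator $\eta_0$ is mapped by $L_-$ \emph{exactly} onto a multiple of $iU_\omega'$, not onto that vector plus an error, and that this multiple carries the sign of $\omega$, which is what flips the constrained scalar to the required negative sign on $(-1,0)$. This hinges on the two Schr\"odinger channels producing the identical proportionality constant $4\omega/(1-\omega^2)$ and on the cancellation in $\int_{\R}|U_\omega|^4\,dx=\sqrt{1-\omega^2}-\omega\arccos\omega$; establishing these is the technical heart of the argument.
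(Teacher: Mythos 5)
Your proposal is essentially the paper's proof. Both arguments reduce the lemma to the constrained index count of Theorem E in Appendix B and evaluate the scalar $\sigma=\langle L_\pm^{-1}{\bf s},{\bf s}\rangle$ by producing explicit preimages. Your relation $L_+\partial_\omega U_\omega=2\omega U_\omega$ is the paper's (\ref{generalized-1}) rewritten through $\Omega=1-\omega^2$ (so $\partial_\omega=-2\omega\,\partial_\Omega$), and your boost vector $\eta_0=(1-2i\omega x)U_\omega$ is exactly the paper's preimage in (\ref{generalized-2}) multiplied by $-4\omega$ and read in the scalar coordinates $u\leftrightarrow(u,\bar u)$; the paper's Remark even confirms your heuristic that (\ref{generalized-2}) is the $c$-derivative of the Lorentz-boosted soliton at $c=0$. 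Your values $-1/(4\omega\sqrt{1-\omega^2})$ and $\sqrt{1-\omega^2}/(4\omega)$ agree with the paper's $\sigma=-1/(2\omega\sqrt{1-\omega^2})$ and $\sigma=\sqrt{1-\omega^2}/(2\omega)$ up to the harmless factor of two coming from the $2$-vector inner product, and the identity $\int_{\R}|U_\omega|^4\,dx=\sqrt{1-\omega^2}-\omega\arccos\omega$ that you invoke is correct (differentiate $\int_{\R}(\omega+\cosh z)^{-1}dz=2\arccos(\omega)/\sqrt{1-\omega^2}$ in $\omega$), so the signs come out on exactly the ranges where the single negative eigenvalue of $L_+$ (for $\omega\in(0,\omega_0)$, Lemma \ref{lemma-eigenvalues-2}) and of $L_-$ (for $\omega\in(-1,0)$, Lemma \ref{lemma-eigenvalues-1}) must be removed.

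The one genuine gap is the point $\omega=0$, which lies inside the interval claimed by the lemma but is covered by neither of your two regimes. There the kernels of $L_\pm$ are double, because of the extra eigenvectors (\ref{subspace-2}); your preimage formulas degenerate ($L_+\partial_\omega U_\omega=2\omega U_\omega$ no longer yields a preimage of $\chi_+$, and both of your scalars blow up like $1/\omega$); and the hypothesis of the index theorem --- that the \emph{entire} kernel of $L_\pm$ be orthogonal to ${\bf s}$ --- fails for the extra eigenvector. In particular your statement that on the non-negative side restriction leaves ``the same simple kernel'' is false at $\omega=0$, where the kernel is not simple. The paper closes this point with the one-line computation (\ref{computation}), namely $\int_{\R}\left( \bar{U}_0 U_0' - U_0 \bar{U}_0' \right) dx=-2i\neq 0$, which shows that the second zero modes $(U_0',-\bar U_0')$ and $(U_0,\bar U_0)$ violate the constraints defining $X_+$ and $X_-$ respectively, so the constrained operators at $\omega=0$ are still non-negative with a simple kernel. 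Adding this check (or an argument carrying the conclusion continuously through the degenerate point, which you would still need to justify) completes your proof; everything else in your computation is sound.
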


\begin{proof}
We use Theorem E in Appendix B and compute the value of $\sigma$ in this theorem
explicitly both for operators $L_+$ and $L_-$.

For operator $L_+$, the constraint (\ref{constraints-1}) yields the vector
${\bf s} = (U_{\omega},\bar{U}_{\omega})$. By taking derivative of the second-order
differential equation (\ref{ODE-second-order}) with respect to $\Omega$, we obtain
\begin{equation}
\label{generalized-1}
L_+ \left[ \begin{array}{c} \partial_{\Omega} U_{\omega} \\ \partial_{\Omega} \bar{U}_{\omega} \end{array} \right] =
-\left[ \begin{array}{c} U_{\omega} \\ \bar{U}_{\omega} \end{array} \right],
\end{equation}
hence
\begin{eqnarray*}
\sigma = -\int_{\R} \left( \bar{U}_{\omega} \frac{\partial U_{\omega}}{\partial \Omega} +
U_{\omega} \frac{\partial \bar{U}_{\omega}}{\partial \Omega} \right) dx
= \frac{1}{2\omega} \frac{d}{d\omega} \int_{\R} |U_{\omega}|^2 dx = -\frac{1}{2\omega \sqrt{1-\omega^2}},
\end{eqnarray*}
where we have used the exact expressions $\Omega = 1 - \omega^2$ and $\| U_{\omega} \|^2 = \arccos(\omega)$.
We verify that $\sigma > 0$ for $\omega \in (-1,0)$ and $\sigma < 0$ for $\omega \in (0,1)$.
By Lemma \ref{lemma-eigenvalues-2}, $L_+$ has no negative eigenvalues for $\omega \in (-\omega_0,0)$ and
has one negative eigenvalue for $\omega \in (0,\omega_0)$, whereas the eigenvector
$(U_{\omega}',\bar{U}_{\omega}')$ for zero eigenvalue of $L_+$
is orthogonal to the vector ${\bf s} = (U_{\omega},\bar{U}_{\omega})$. Conditions
of Theorem E in Appendix B are satisfied and $L_+$ has no negative eigenvalues and a simple zero eigenvalue
in the constrained space $X_+$ for all $\omega \in (-\omega_0,\omega_0)$. Note that the result
holds also for $\omega = 0$, since the eigenvector $(U_0',-\bar{U}_0')$ in (\ref{subspace-2}) does not
belong to the constrained space $X_+$ because
\begin{equation}
\label{computation}
\omega = 0 : \quad \int_{\mathbb{R}} \left( \bar{U}_0 U_0' - U_0 \bar{U}_0' \right) dx =
-i \int_{\mathbb{R}} \left( 4 |U_0|^4 - U_0^2 - \bar{U}_0^2 \right) dx = -2i \neq 0.
\end{equation}

For operator $L_-$, the constraint (\ref{constraints-2}) yields the vector
${\bf s} = (U_{\omega}',-\bar{U}_{\omega}')$. By using the differential equations (\ref{ODE}) and
(\ref{ODE-second-order}), we obtain
\begin{equation}
\label{generalized-2}
L_- \left( -\frac{1}{2} x \left[ \begin{array}{c} U_{\omega} \\ -\bar{U}_{\omega} \end{array} \right]
+ \frac{1}{4 i \omega} \left[ \begin{array}{c} U_{\omega} \\ \bar{U}_{\omega} \end{array} \right] \right) =
\left[ \begin{array}{c} U_{\omega}' \\ -\bar{U}_{\omega}' \end{array} \right],
\end{equation}
hence
\begin{eqnarray*}
\sigma & = & \int_{\R} \left( \frac{1}{2} |U_{\omega}|^2 - \frac{1}{4 i \omega}
\left( \bar{U}_{\omega} U_{\omega}' - U_{\omega} \bar{U}_{\omega}' \right) \right) dx \\
& = & \frac{1}{4\omega} \int_{\R} \left( 4 |U_{\omega}|^4 - U_{\omega}^2 - \bar{U}_{\omega}^2 + 4 \omega |U_{\omega}|^2 \right) dx \\
& = & \frac{1-\omega^2}{2\omega} \int_{\R} \frac{1 + \omega \cosh(2 \sqrt{1-\omega^2} x)}{(\omega + \cosh(2 \sqrt{1-\omega^2} x))^2} dx \\
& = & \frac{\sqrt{1 - \omega^2}}{2 \omega}.
\end{eqnarray*}
We verify that $\sigma < 0$ for $\omega \in (-1,0)$ and $\sigma >  0$ for $\omega \in (0,1)$.
By Lemma \ref{lemma-eigenvalues-1}, $L_-$ has one negative eigenvalue for $\omega \in (-\omega_0,0)$ and
no negative eigenvalues for $\omega \in (0,1)$, whereas the eigenvector $(U_{\omega},-\bar{U}_{\omega})$
for zero eigenvalue of $L_-$
is orthogonal to the vector ${\bf s} = (U_{\omega}',-\bar{U}_{\omega}')$. Conditions
of Theorem E in Appendix B are satisfied and $L_-$ has no negative eigenvalues and a simple zero eigenvalue
in the constrained space $X_-$ for all $\omega \in (-\omega_0,\omega_0)$. Again, the result
holds also for $\omega = 0$, since the eigenvector $(U_0,\bar{U}_0)$ of $L_-$ in (\ref{subspace-2}) does not
belong to the constrained space $X_-$ because of the same computation (\ref{computation}).
\end{proof}

\begin{remark}
Solutions of the inhomogeneous equations (\ref{generalized-1}) and (\ref{generalized-2}) define
so-called generalized eigenvectors associated with the zero eigenvalue of the spectral stability problem
associated with MTM solitons of the MTM system (\ref{MTM}).
These solutions are related to translation of the soliton orbit with respect to parameters $\omega$ and $c$.
Indeed, from the Lorentz transformation (\ref{Lorentz}), we realize that the solution (\ref{generalized-2})
is related to the derivative of the MTM soliton with respect to parameter $c$ at $c = 0$.
\end{remark}

The proof of Theorem \ref{theorem-minimum} follows from Lemma \ref{lemma-constraints} and the fact that
the eigenvectors (\ref{subspace-1}) for the zero eigenvalues of $L_+$ and $L_-$ are removed by additing
additional constraints
\begin{eqnarray}
\label{constraints-3}
\tilde{X}_+ & := &  \left\{ u \in L^2(\R) : \quad \int_{\R} \left( \bar{U}_{\omega}' u + U_{\omega}' \bar{u} \right) dx = 0 \right\}, \\
\label{constraints-4}
\tilde{X}_- & := &  \left\{ u \in L^2(\R) : \quad \int_{\R} \left( \bar{U}_{\omega} u - U_{\omega} \bar{u} \right) dx = 0 \right\},
\end{eqnarray}
which are associated with the eigenvectors (\ref{subspace-1}).

Note that the constraints in $X_+$ and $\tilde{X}_-$ give real and imaginary parts of the
complex-valued constraint (\ref{constraint-1}),  whereas
the constraints in $\tilde{X}_+$ and $X_-$ give real and imaginary parts of the
complex-valued constraint (\ref{constraint-2}). Therefore, the
Hessian operator $L$ in (\ref{Hessian}) is strictly positive under the complex-valued
constraints (\ref{constraint-1}) and (\ref{constraint-2}) for $\omega \in (-\omega_0,\omega_0)$
and the proof of Theorem \ref{theorem-minimum} is complete.

The proof of Theorem \ref{theorem-main} is based on the conservation of functionals $R$, $Q$, and $P$
for a solution of the massive Thirring model (\ref{MTM}) in $H^1(\R,\C^2)$ and the standard orbital stability
arguments (Theorem F in Appendix B).

\appendix
\section{Conserved quantities by the inverse scattering method}

The MTM system (\ref{MTM}) is a compatibility condition of the Lax system
\begin{equation}
\label{Lax}
\frac{\partial}{\partial x} \vec{\phi} = L \vec{\phi}, \quad \frac{\partial}{\partial t} \vec{\phi} = A \vec{\phi},
\end{equation}
where $\vec{\phi}(x,t) : \R \times \R \to \C^2$ and $L$ is given by \cite{KL1,KM}:
\begin{equation}
L=\frac{i}{2}(|v|^2-|u|^2)\sigma_3-\frac{i\lambda}{\sqrt{2}}\left(\begin{matrix} 0 & \overline{v} \\ v & 0 \end{matrix}\right) - \frac{i}{\sqrt{2}\lambda} \left(\begin{matrix} 0 & \overline{u} \\ u & 0 \end{matrix}\right) +\frac{i}{4}\left(\frac{1}{\lambda^2}-\lambda^2\right)\sigma_3.
\end{equation}

Let us consider a Jost function $\vec{\phi}(x;\lambda)$, which satisfies the boundary condition
\begin{equation} \label{phicon}
\lim_{x \to -\infty} e^{-ik(\lambda)x} \vec{\phi}(x;\lambda) = \left[\begin{matrix} 1 \\ 0 \end{matrix} \right],
\end{equation}
where $k(\lambda) := \frac{1}{4}(\lambda^{-2} - \lambda^2) \in \mathbb{R}$ if $\lambda^2 \in \mathbb{R}$.
This Jost function satisfies the scattering
relation as $x \to +\infty$,
\begin{equation}
\vec{\phi}(x;\lambda) \sim a(\lambda) e^{ik(\lambda)x} \left[\begin{matrix} 1 \\ 0 \end{matrix} \right]
+ b(\lambda) e^{-ik(\lambda)x} \left[\begin{matrix} 0 \\ 1 \end{matrix} \right],
\end{equation}
where $a(\lambda)$ and $b(\lambda)$ are spectral coefficients for $\lambda^2 \in \mathbb{R}$.

Setting
\begin{equation} \label{solution1}
\vec{\phi}(x;\lambda) =\left[\begin{matrix} 1 \\ \nu(x;\lambda) \end{matrix} \right]
\exp\left(i k(\lambda)x+\int_{-\infty}^{x}\chi(x';\lambda)dx' \right)
\end{equation}
with two functions $\chi(x;\lambda)$ and $\nu(x;\lambda)$ satisfying the boundary conditions
$$
\lim_{x \rightarrow -\infty} \chi(x;\lambda) = 0 \quad \mbox{\rm and}
\quad \lim_{x\rightarrow -\infty} \nu(x; \lambda)=0
$$
and taking a limit $x\rightarrow \infty$ in \eqref{solution1}, we obtain
\begin{equation}
a(\lambda) =  \exp\left(\int_{-\infty}^{\infty}\chi(x;\lambda)dx \right) \quad \Rightarrow \quad
\log a(\lambda)=\int_{-\infty}^{\infty}\chi(x;\lambda)dx.
\end{equation}

The scattering coefficient $a(\lambda)$ does not depend on time $t$,
hence expansion of $\int_{-\infty}^{\infty}\chi(x;\lambda)dx$ in powers of $\lambda$
yields conserved quantities with respect to $t$  \cite{KM}.
Substituting equation \eqref{phicon} into the $x$-derivative part of the Lax system (\ref{Lax}),
we obtain
\begin{equation} \label{chieq}
\chi = \frac{i}{2} (|v|^2 - |u|^2) -
\frac{i}{\sqrt{2}}\left(\lambda \overline{v}+\frac{1}{\lambda}\overline{u}\right) \nu,
\end{equation}
where $\nu$ satisfies a Ricatti equation
\begin{equation} \label{riccatiA}
\nu_x + i\left(2 k(\lambda) + |v|^2 - |u|^2\right) \nu
-\frac{i}{\sqrt{2}}\left(\lambda \overline{v}+\frac{1}{\lambda}\overline{u}\right) \nu^2
+\frac{i}{\sqrt{2}}\left(\lambda v+\frac{1}{\lambda}u\right)=0.
\end{equation}

To generate two hierarchies of conserved quantities, we consider the formal asymptotic expansion of
$\chi(x;\lambda)$ in powers and inverse powers of $\lambda$:
\begin{equation}
\chi(x;\lambda)=\sum_{n=0}^{\infty}\lambda^n\chi_n(x), \quad \nu(x;\lambda)=\sum_{n=1}^{\infty}\lambda^n \nu_n(x)
\end{equation}
and
\begin{equation}
\chi(x;\lambda)=\sum_{n=0}^{\infty}\frac{1}{\lambda^n}\widetilde{\chi}_{n}(x), \quad
\nu(x;\lambda)=\sum_{n=1}^{\infty}\frac{1}{\lambda^n}\widetilde{\nu}_{n}(x).
\end{equation}
We set
\begin{equation}
I_n := \int_{-\infty}^{\infty}\chi_n(x)dx, \quad I_{-n} := \int_{-\infty}^{\infty}\widetilde{\chi}_n(x)dx.
\end{equation}
Substitution of the asymptotic expansion of $\nu$ into the Riccati equation \eqref{riccatiA} allows one
to determine each $\nu_n$ and $\widetilde{\nu}_{n}$ from which equation \eqref{chieq}
is used to determine $\chi_n$ and $\widetilde{\chi}_n$. Let us explicitly write out first conserved quantities
$$
I_0=\int_{\mathbb{R}}(|u|^2+|v|^2)dx,
$$
$$
I_2=\int_{\mathbb{R}}(-2u_x\overline{u}+i\overline{v}u+i\overline{u}v-2i|u|^2|v|^2)dx,
$$
$$
I_{-2}=\int_{\mathbb{R}}(-2v_x\overline{v}-i\overline{v}u-i\overline{u}v+2i|u|^2|v|^2)dx,
$$
\begin{eqnarray*}
I_4 & = & \int_{\mathbb{R}}[-4i\overline{u}u_{xx}-2(u_x\overline{v}+\overline{u}v_x)+
4\overline{u}(u|v|^2)_x+4u_x\overline{u}(|u|^2+|v|^2)+i(|u|^2+|v|^2) \\
& \phantom{t} & \phantom{text} -2iu\overline{v}(|u|^2+|v|^2) - 2iv\overline{u}(|u|^2+|v|^2)+4i|u|^2|v|^2(|u|^2+|v|^2)]dx,
\end{eqnarray*}
and
\begin{eqnarray*}
I_{-4} & = & \int_{\mathbb{R}} [4i\overline{v}v_{xx}-2(u_x\overline{v}+\overline{u}v_x)+4\overline{v}(v|u|^2)_x+
4v_x\overline{v}(|u|^2+|v|^2)-i(|u|^2+|v|^2)\\
& \phantom{t} & \phantom{text} +2iu\overline{v}(|u|^2+|v|^2) + 2iv\overline{u}(|u|^2+|v|^2)-4i|u|^2|v|^2(|u|^2+|v|^2)]dx.
\end{eqnarray*}

We note that $I_0$ corresponds to charge $Q$ in (\ref{quantity-Q}). After integration by parts,
$I_2+I_{-2}$ corresponds to momentum $P$ in (\ref{quantity-P}) and $I_2-I_{-2}$ corresponds
to Hamiltonian $H$ in (\ref{quantity-H}). The higher-order Hamiltonian $R$ in (\ref{quantity-R})
is obtained from $I_4-I_{-4}$ after integration by parts and dropping the conserved quantity $Q$
from the definition of $R$.

Using Wolfram's MATHEMATICA, we also obtain the balance equation for $R$:
\begin{equation}
\label{balance-R}
\frac{\partial \rho}{\partial t} + \frac{\partial j}{\partial x} = 0,
\end{equation}
where
\begin{eqnarray*}
\rho & = & |u_{x}|^2+|v_x|^2 - \frac{i}{2}(u_x\overline{u}-\overline{u}_xu)(|u|^2+2|v|^2) +
\frac{i}{2}(v_x\overline{v}-\overline{v}_xv)(2|u|^2+|v|^2) \\
& \phantom{t} & -(u\overline{v}+\overline{u}v)(|u|^2+|v|^2)+2|u|^2|v|^2(|u|^2+|v|^2)
\end{eqnarray*}
and
\begin{eqnarray*}
j & = & |u_{x}|^2 - |v_x|^2 - \frac{i}{2}(u_x\overline{u}-\overline{u}_xu)(|u|^2+2|v|^2) -
\frac{i}{2}(v_x\overline{v}-\overline{v}_xv)(2|u|^2+|v|^2)\\
& \phantom{t} & - \frac{1}{2} (u\overline{v}+\overline{u}v)(|u|^2-|v|^2).
\end{eqnarray*}

\section{Auxiliary results used in this work}

We are using the following technical results in the main part of this article.
In the next four results, we consider a linear Schr\"{o}dinger operator
$L : H^2(\mathbb{R}) \to L^2(\mathbb{R})$ given by
$$
L:= -\partial_x^2 + c + V(x),
$$
where $V \in L^1(\mathbb{R}) \cap L^{\infty}(\mathbb{R})$ and $c > 0$ is fixed.

\vspace{0.25cm}

\renewcommand{\theorem}{{\bf Theorem A }}
\begin{theorem}{\cite[Lemma 4.2]{Pel-book}}{\bf .}
There exists a unique solution of $L u_0 = \lambda_0 u_0$ for any $\lambda_0 \leq c$
such that $u_0 \in H^2_{\rm loc}(\mathbb{R})$ and $\lim_{x\to -\infty} e^{-\sqrt{c-\lambda_0} x} u_0(x) = 1$.
If $u_0$ has $n(\lambda_0)$ zeros on $\mathbb{R}$, then there exists exactly $n(\lambda_0)$ eigenvalues of $L$
for any $\lambda < \lambda_0$.
\end{theorem}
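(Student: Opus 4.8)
The statement is the classical Sturm nodal (oscillation) theorem for the Schr\"odinger operator $L = -\partial_x^2 + c + V$, and the plan is to separate an existence/uniqueness part from a nodal-counting part.

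\emph{Existence and uniqueness of $u_0$.} I would set $\kappa := \sqrt{c - \lambda_0} \ge 0$, so that $Lu_0 = \lambda_0 u_0$ reads $u_0'' = (\kappa^2 + V)u_0$, and recast this together with the prescribed behavior at $-\infty$ as the Volterra integral equation
\begin{equation}
\label{eq:sturm-volterra}
u_0(x) = e^{\kappa x} + \int_{-\infty}^x \frac{\sinh(\kappa(x-y))}{\kappa} V(y) u_0(y)\, dy
\end{equation}
for $\kappa > 0$, with the analogous equation (kernel $(x-y)$, free term $1$) in the threshold case $\kappa = 0$, i.e. $\lambda_0 = c$. Passing to $w := e^{-\kappa x} u_0$ turns \eqref{eq:sturm-volterra} into a fixed-point equation whose kernel is bounded by $(2\kappa)^{-1}$ and whose forcing term is $1$; since $V \in L^1(\R)$, the Neumann series converges absolutely and uniformly on every half-line $(-\infty, R]$, producing a unique solution with $w(x) \to 1$, that is $e^{-\kappa x}u_0(x)\to 1$, as $x \to -\infty$. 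The ODE itself places $u_0 \in H^2_{\rm loc}(\R)$, and uniqueness follows from Gronwall's inequality applied to the difference of two solutions of \eqref{eq:sturm-volterra}.

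\emph{Nodal counting via the Pr\"ufer transformation.} I would then view $u_0 = u(\cdot\,;\lambda)$ as depending on the spectral parameter; since $\kappa = \sqrt{c-\lambda}$ and the kernel in \eqref{eq:sturm-volterra} are smooth for real $\lambda < c$, the solution and its $x$-derivative depend smoothly on $\lambda$ on compact $x$-sets. Writing $u = r\sin\theta$, $u' = r\cos\theta$, the phase satisfies
\begin{equation}
\label{eq:sturm-prufer}
\theta'(x;\lambda) = \cos^2\theta - (c - \lambda + V)\sin^2\theta.
\end{equation}
At each zero of $u$ one has $\theta \in \pi\Z$ and $\theta' = 1 > 0$, so the zeros of $u(\cdot\,;\lambda)$ on $\R$ are in bijection with the crossings of $\theta$ through multiples of $\pi$, and the total number of zeros is governed by the phase increment $\theta(+\infty;\lambda) - \theta(-\infty)$. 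The normalization pins down $\theta(-\infty) = \operatorname{arccot}\kappa \in (0,\pi/2]$. Because the $\lambda$-derivative of the right-hand side of \eqref{eq:sturm-prufer} at fixed $\theta$ equals $\sin^2\theta \ge 0$, a comparison (variational-equation) argument shows $\theta(x;\lambda)$ is nondecreasing in $\lambda$, so the zero count is monotone in $\lambda$.

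\emph{Relating zeros to eigenvalues, and the main obstacle.} It remains to show that the zero count jumps by exactly one, and only, as $\lambda$ crosses each eigenvalue below $c$. The eigenvalue characterization is that $\lambda$ is an eigenvalue of $L$ precisely when the solution $u(\cdot\,;\lambda)$, already decaying at $-\infty$, is also square integrable at $+\infty$; away from eigenvalues $u(\cdot\,;\lambda)$ grows like $e^{\sqrt{c-\lambda}\,x}$ and hence has only finitely many zeros, confined to a bounded region. I expect the delicate point to be the behavior at $+\infty$ on the noncompact line: one must prove that a single zero is injected ``from $+\infty$'' as $\lambda$ increases through an eigenvalue (equivalently, that the limiting phase $\theta(+\infty;\lambda)$ advances by exactly $\pi$), and that no new zeros appear strictly between consecutive eigenvalues. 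I would establish this either by a direct asymptotic analysis of \eqref{eq:sturm-volterra} at $+\infty$ for $\lambda$ below the continuous spectrum $[c,\infty)$, or by truncating to $[-R,R]$ with Dirichlet endpoints—where the finite-interval Sturm theorem (the $n$-th Dirichlet eigenfunction has exactly $n-1$ interior zeros) applies verbatim—and letting $R \to \infty$, using $V \in L^1(\R)$ to control the convergence of both the truncated eigenvalues and the nodal positions. Combining the phase count of paragraph two with this eigenvalue characterization then yields that the number of zeros $n(\lambda_0)$ of $u_0 = u(\cdot\,;\lambda_0)$ equals the number of eigenvalues of $L$ strictly below $\lambda_0$, which is the assertion.
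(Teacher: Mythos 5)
Note first that the paper contains no proof to compare against: Theorem A is one of the results that Appendix B states without proof, citing Lemma 4.2 of \cite{Pel-book}. Measured against the standard argument for the oscillation theorem on the line, your reconstruction --- Jost solution at $-\infty$ via a Volterra equation and Neumann series, Pr\"ufer phase with $\theta' = \cos^2\theta - (c-\lambda+V)\sin^2\theta$, transversal upward crossings of $\pi\Z$ at zeros, monotonicity of the phase in $\lambda$ (note that the initial phase $\operatorname{arccot}\sqrt{c-\lambda}$ is itself increasing in $\lambda$, which your comparison argument should use alongside the $\sin^2\theta\geq 0$ term), and identification of eigenvalues with quantized limiting phase --- is the right architecture, and for $\lambda_0 < c$ your sketch is essentially the classical proof.

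There are, however, two genuine soft spots. First, the threshold case $\lambda_0 = c$, i.e.\ $\kappa = 0$, is included in the statement and is exactly the case the paper uses: in the proof of Lemma \ref{lemma-eigenvalues-1}, Theorem A is invoked at the end-point $\lambda = 1$, where $\psi_c$ and $\tilde{\psi}_c$ are resonance functions. There your convergence claim fails as written: with kernel $(x-y)$ the first iterate already involves $\int_{-\infty}^x (x-y)|V(y)|\,dy$, which diverges for some $V \in L^1(\R)\cap L^\infty(\R)$ (take $V(y) = (1+y^2)^{-1}$); the zero-energy Jost solution requires the first-moment condition $\int_{\R}(1+|y|)|V(y)|\,dy < \infty$. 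This costs nothing for the exponentially decaying potentials arising in this paper, but under the hypotheses as stated the assertion ``since $V \in L^1$ the Neumann series converges'' is false at $\kappa = 0$, so the moment hypothesis must be added (as it is, implicitly, in the source). Second, the core of the theorem --- that the limiting phase advances by exactly $\pi$ as $\lambda$ crosses each eigenvalue, and that no zeros appear strictly between eigenvalues --- is precisely the step you defer to one of two named strategies. Both can be made to work, but neither is routine: the truncation route needs convergence of the Dirichlet eigenvalues on $[-R,R]$ \emph{and} stabilization of the nodal count, which in turn requires the uniform nonoscillation estimate that, for $\lambda$ in a compact subset of $(-\infty,c)$, all zeros of $u(\cdot;\lambda)$ lie in a fixed compact set (available because $c-\lambda+V > 0$ outside a compact set, since $V \in L^1\cap L^\infty$ does not by itself force $V \to 0$; for the decaying potentials at hand this is immediate). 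Since the entire content of the theorem is this zeros-to-eigenvalues correspondence, a complete write-up must actually execute one of the two routes rather than name them.
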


\vspace{0.25cm}

\renewcommand{\theorem}{{\bf Theorem B }}
\begin{theorem}{\cite[Theorem B.10]{Pel-book}}{\bf .}
Let $u(x;V)$ be a solution of $L u = c u$
such that $\lim_{x\to -\infty} u(x;V) = 1$. Assume that $V_1(x) > V_2(x)$ for all $x \in \mathbb{R}$
and $u(x;V_2)$ has one zero on $\mathbb{R}$. Then, $u(x;V_1)$ has at most one zero on $\mathbb{R}$.
\end{theorem}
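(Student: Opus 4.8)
The plan is to reduce the hypothesis to a zero-energy comparison and then run a Wronskian argument. Setting $u_i := u(\cdot; V_i)$ for $i = 1,2$, the equation $L u_i = c u_i$ with $L = -\partial_x^2 + c + V_i$ collapses to $u_i'' = V_i u_i$. Since $V_i \in L^1(\R)$, the product $V_i u_i$ is integrable near $-\infty$, so the normalization $u_i \to 1$ selects the bounded (rather than linearly growing) behaviour at $-\infty$ and forces $u_i' \to 0$ there as well. In particular each $u_i$ is positive for all sufficiently negative $x$, and every zero of $u_i$ is a transverse crossing, since a double zero would force $u_i \equiv 0$ by uniqueness for the ODE $u_i'' = V_i u_i$.

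The engine of the proof is the Wronskian $W := u_1' u_2 - u_1 u_2'$, which satisfies $W' = u_1'' u_2 - u_1 u_2'' = (V_1 - V_2) u_1 u_2$ and, by the boundary data above, $W \to 0$ as $x \to -\infty$. Because $V_1 - V_2 > 0$ everywhere, the sign of $W'$ is exactly the sign of the product $u_1 u_2$; this is what converts the pointwise potential inequality into information about zeros.

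Let $z$ be the unique zero of $u_2$, so that $u_2 > 0$ on $(-\infty, z)$ and $u_2 < 0$ on $(z, \infty)$. First I would show that the first zero $a$ of $u_1$ (if it exists) satisfies $a \geq z$: for $x < \min(a,z)$ both functions are positive, hence $W' > 0$ and $W$ increases from $0$, forcing $W > 0$; but if $a < z$ then $W(a) = u_1'(a) u_2(a) < 0$ (using $u_1(a) = 0$, $u_1'(a) < 0$, $u_2(a) > 0$), a contradiction. Then, to rule out a second zero, I would argue by contradiction: if $u_1$ had a second zero $b > a \geq z$, then on $(a,b)$ one has $u_1 < 0$ and $u_2 < 0$, so $u_1 u_2 > 0$ and $W$ is strictly increasing there; but $W(a) = u_1'(a) u_2(a) \geq 0$ while $W(b) = u_1'(b) u_2(b) < 0$ (now $u_1'(b) > 0$ and $u_2(b) < 0$), contradicting $W(b) > W(a)$. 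Hence $u_1 = u(\cdot; V_1)$ has at most one zero.

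The step requiring the most care is the boundary analysis at $-\infty$ — establishing $u_i \to 1$ together with $u_i' \to 0$, so that $W(-\infty) = 0$, which is precisely where the $L^1$ hypothesis on $V$ enters — along with the borderline configuration $a = z$. In that degenerate case the endpoint value $W(a)$ merely vanishes instead of being strictly positive, but since $W$ is still strictly increasing on $(a,b)$ one obtains $W(b) > 0$, again contradicting $W(b) < 0$; so the conclusion is unaffected.
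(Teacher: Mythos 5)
Your proof is correct, and there is nothing in the paper to compare it against: Appendix~B explicitly lists Theorem~B \emph{without proof}, quoting it from \cite[Theorem B.10]{Pel-book}, and your Wronskian argument --- reducing $Lu = cu$ to $u'' = Vu$, using $V \in L^1$ to get $u_i \to 1$, $u_i' \to 0$ and hence $W \to 0$ at $-\infty$, then exploiting $W' = (V_1 - V_2)\,u_1 u_2 $ with sign control from the simple zeros --- is the standard Sturm comparison proof that the cited reference uses. One small remark: your ``borderline configuration'' $a = z$ is in fact vacuous, since on $(-\infty, a)$ both solutions are positive, so $W$ is strictly increasing from $0$ and $W(a) > 0$, whereas $u_1(a) = u_2(a) = 0$ would force $W(a) = 0$; thus $a > z$ strictly, though your separate handling of $W(a) = 0$ is harmless and the conclusion is unchanged.
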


\vspace{0.25cm}

\renewcommand{\theorem}{{\bf Theorem C }}
\begin{theorem}{\cite[Section I.6.10]{Kato}}{\bf .}
There exists the smallest eigenvalue $\lambda_0 < c$ of $L$ if and only if
$$
\lambda_0 = \inf_{u \in H^1(\mathbb{R}) : \| u \|_{L^2} = 1} \int_{\mathbb{R}}
\frac{1}{2} \left[ (\partial_x u)^2 + c u^2 + V(x) u^2 \right] dx < c.
$$
In particular, if $\lambda_0 = 0$ for $V = V_0$ and $\Delta V > 0$, then
$\lambda_0 \gtrless 0$ for $V = V_0 \pm \Delta V$.
\end{theorem}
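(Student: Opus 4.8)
The plan is to prove the statement by the Rayleigh--Ritz variational characterization of the bottom of the spectrum of the self-adjoint operator $L$, combined with Weyl's theorem on the essential spectrum. Write the quadratic form associated with $L$,
$$
\mathcal{Q}(u) := \int_{\mathbb{R}} \left[ (\partial_x u)^2 + c\, u^2 + V(x) u^2 \right] dx, \qquad u \in H^1(\mathbb{R}),
$$
and set $\Sigma_0 := \inf\{ \mathcal{Q}(u) : u \in H^1(\mathbb{R}),\ \|u\|_{L^2} = 1\}$, which (up to the overall normalization in the statement) is the right-hand side of the claimed identity. Since $V \in L^1(\mathbb{R}) \cap L^{\infty}(\mathbb{R})$ decays at infinity, it is a relatively compact perturbation of $-\partial_x^2 + c$; hence by Weyl's theorem $\sigma_{\mathrm{ess}}(L) = \sigma_{\mathrm{ess}}(-\partial_x^2 + c) = [c,\infty)$, while the min--max principle gives $\inf \sigma(L) = \Sigma_0$.

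First I would establish the equivalence. If $\Sigma_0 < c$, then $\Sigma_0 = \inf \sigma(L)$ lies strictly below the essential spectrum, so it is an isolated point of $\sigma(L)$; a point of the spectrum of a self-adjoint operator below the essential spectrum is an eigenvalue of finite multiplicity. Thus $\lambda_0 := \Sigma_0$ is the smallest eigenvalue and $\lambda_0 < c$, which gives one direction together with the claimed identity. Conversely, if a smallest eigenvalue $\lambda_0 < c$ exists, then $\lambda_0 = \inf\sigma(L) = \Sigma_0 < c$. To exhibit the minimizer directly one runs the direct method: a minimizing sequence $\{u_n\}$ with $\|u_n\|_{L^2}=1$ is bounded in $H^1(\mathbb{R})$ because $\int (\partial_x u_n)^2 = \mathcal{Q}(u_n) - c - \int V u_n^2 \leq \mathcal{Q}(u_n) - c + \|V\|_{L^\infty}$, and its weak $H^1$ limit $u$ is a genuine minimizer solving $L u = \Sigma_0 u$.

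The \emph{in particular} assertion follows from the affine, monotone dependence of the quotient on the potential. For fixed $u$ with $\|u\|_{L^2}=1$ one has $\mathcal{Q}_{V_0 \pm \Delta V}(u) = \mathcal{Q}_{V_0}(u) \pm \int_{\mathbb{R}} \Delta V\, u^2\, dx$, with $\int \Delta V\, u^2 > 0$ since $\Delta V > 0$. Let $u_0$ be the normalized eigenfunction attaining $\Sigma_0(V_0) = \lambda_0 = 0$. For the minus sign, testing with $u_0$ gives $\Sigma_0(V_0 - \Delta V) \leq \mathcal{Q}_{V_0}(u_0) - \int \Delta V\, u_0^2 = -\int \Delta V\, u_0^2 < 0$, so the bottom of the spectrum is negative, hence below $c$ and equal to the negative smallest eigenvalue. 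For the plus sign, the strict pointwise inequality $\mathcal{Q}_{V_0+\Delta V}(u) > \mathcal{Q}_{V_0}(u) \geq 0$ holds for every admissible $u$; evaluating at the minimizer that attains $\Sigma_0(V_0+\Delta V)$ (which exists and lies below $c$ by the zero-counting supplied by Theorem B) forces $\lambda_0(V_0 + \Delta V) = \Sigma_0(V_0 + \Delta V) > 0$.

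The main obstacle is the attainment of the infimum in the reverse direction, where the non-compact embedding $H^1(\mathbb{R}) \hookrightarrow L^2(\mathbb{R})$ allows a minimizing sequence to lose $L^2$ mass at spatial infinity. The device that resolves this is exactly the strict gap $\Sigma_0 < c$: since the potential is asymptotically zero, any mass escaping to infinity contributes at least $c$ to the quotient, contradicting $\Sigma_0 < c$; a concentration--compactness dichotomy (or an IMS localization estimate) rules out splitting and vanishing and produces a nontrivial weak limit of full mass. If one is content to invoke abstract self-adjoint spectral theory, this analytic step can be bypassed altogether, since any point of $\sigma(L)$ below $\sigma_{\mathrm{ess}}(L)$ is automatically an eigenvalue.
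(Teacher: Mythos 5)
There is no paper proof to compare against here: this statement is Theorem C of Appendix B, which the paper explicitly presents without proof as a quoted auxiliary result from Kato [Section I.6.10]. Your argument is correct, and it is precisely the standard variational proof underlying that citation. Since $V \in L^1(\mathbb{R}) \cap L^{\infty}(\mathbb{R}) \subset L^2(\mathbb{R})$, the operator $V(-\partial_x^2 + c)^{-1}$ is Hilbert--Schmidt in one dimension, so Weyl's theorem gives $\sigma_{\mathrm{ess}}(L) = [c,\infty)$; combined with the Rayleigh--Ritz identity $\inf \sigma(L) = \Sigma_0$ and the fact that spectrum strictly below the essential spectrum is discrete, this yields the equivalence in both directions, and your closing observation that abstract spectral theory lets you bypass the concentration--compactness attainment step entirely is the right economy. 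Two remarks. First, you correctly flagged the factor $\frac{1}{2}$ in the paper's display as a normalization slip: as written, the infimum equals $\lambda_0/2$ rather than $\lambda_0$. Second, in the $V_0 + \Delta V$ case your instinct is right that the strict pointwise inequality $\mathcal{Q}_{V_0+\Delta V}(u) > \mathcal{Q}_{V_0}(u) \geq 0$ only gives $\Sigma_0 \geq 0$ in the limit, so strict positivity must be extracted by evaluating at an eigenfunction; however, your appeal to Theorem B to guarantee existence of that minimizer is extraneous to (and slightly circular within) Theorem C itself. The cleaner reading, consistent with how Lemma \ref{lemma-eigenvalues-1} of the paper actually deploys Theorems A--C, is that the sign assertion for $+\Delta V$ is conditional on the smallest eigenvalue existing (existence being supplied separately by the zero-counting arguments), after which $\lambda_0 = \mathcal{Q}_{V_0+\Delta V}(u_*) > \mathcal{Q}_{V_0}(u_*) \geq 0$ for the normalized eigenfunction $u_*$ closes the argument exactly as you wrote; by contrast, the $-\Delta V$ case is self-contained, since the test bound $\Sigma_0(V_0 - \Delta V) \leq -\int_{\mathbb{R}} \Delta V\, u_0^2\, dx < 0 < c$ itself certifies existence of the negative eigenvalue through the equivalence just proved.
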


\vspace{0.25cm}

\renewcommand{\theorem}{{\bf Theorem D }}
\begin{theorem}{\cite[Section VII.4.6]{Kato}}{\bf .}
Let $\lambda_0 < c$ be an isolated eigenvalue of $L$ with the eigenfunction $u_0 \in H^2(\mathbb{R})$.
Then, the perturbed operator $\tilde{L} := L + \Delta V$ with $\Delta V \in L^{\infty}(\mathbb{R})$ has
a perturbed eigenvalue $\tilde{\lambda}_0$ near $\lambda_0$ and the sign of $\tilde{\lambda_0} - \lambda_0$
coincides with the sign of the quadratic form $\langle \Delta V u_0, u_0 \rangle_{L^2}$.
\end{theorem}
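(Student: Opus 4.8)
The plan is to prove this by the standard Rellich--Kato analytic perturbation theory for a simple isolated eigenvalue of a self-adjoint operator, extracting the sign of the shift from the first-order (Feynman--Hellmann) formula. I treat $\Delta V$ as a small bounded multiplication operator by introducing the one-parameter family $L(t) := L + t \Delta V$ for $t$ in a neighborhood of $0$, and recover the claim at $t = 1$ once $\| \Delta V \|_{L^{\infty}}$ is small enough that the relevant eigenvalue stays below the essential spectrum. First I would check that $L(t)$ is a self-adjoint holomorphic family of type (A) on the fixed domain $H^2(\mathbb{R})$: since $\Delta V \in L^{\infty}(\mathbb{R})$, multiplication by $t \Delta V$ is bounded and symmetric with $\| t \Delta V \|_{L^2 \to L^2} \leq |t| \, \| \Delta V \|_{L^{\infty}}$, so it is $L$-bounded with relative bound zero; hence each $L(t)$ is self-adjoint on $H^2(\mathbb{R})$ and the family depends analytically on $t$.

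Next I would control the spectrum near $\lambda_0$. By hypothesis $\lambda_0 < c$ is an isolated eigenvalue of $L = L(0)$, and since the essential spectrum of $L(t)$ is contained in $[c - |t| \, \| \Delta V \|_{L^{\infty}}, \infty)$ by bounded perturbation, I can fix $\delta > 0$ and $t_0 > 0$ small so that for all $|t| \leq t_0$ the only spectrum of $L(t)$ inside the circle $\Gamma := \{ z : |z - \lambda_0| = \delta \}$ lies strictly interior to $\Gamma$, and $\Gamma$ separates it from the remaining spectrum. The Riesz projection $P(t) := -\frac{1}{2\pi i} \oint_{\Gamma} (L(t) - z)^{-1} \, dz$ then depends analytically on $t$ with constant (rank-one) range, so $\lambda_0$ perturbs to a single analytic eigenvalue branch $\lambda(t)$ carrying a normalized analytic eigenvector $u(t)$ with $u(0) = u_0$.

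Then I would differentiate the eigenvalue relation $L(t) u(t) = \lambda(t) u(t)$ at $t = 0$, giving $\Delta V u_0 + L \dot{u}(0) = \dot{\lambda}(0) u_0 + \lambda_0 \dot{u}(0)$. Pairing with $u_0$ in $L^2$ and using self-adjointness of $L$ to eliminate the correction term, $\langle L \dot{u}(0), u_0 \rangle - \lambda_0 \langle \dot{u}(0), u_0 \rangle = 0$, together with the normalization $\| u_0 \|_{L^2} = 1$, yields the first-order formula $\dot{\lambda}(0) = \langle \Delta V u_0, u_0 \rangle_{L^2}$. Consequently, when this quadratic form is nonzero, the sign of $\lambda(t) - \lambda_0 = t \, \langle \Delta V u_0, u_0 \rangle + O(t^2)$ agrees with that of $\langle \Delta V u_0, u_0 \rangle$ for all sufficiently small $t > 0$; rescaling $\Delta V$, or reading off $t = 1$ under the implicit smallness of $\| \Delta V \|_{L^{\infty}}$, gives the statement for $\tilde{L} = L + \Delta V$.

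The main obstacle is not the first-order computation but the spectral bookkeeping of the second step: one must guarantee that the perturbed eigenvalue neither collides with the essential spectrum nor loses simplicity, so that the analytic branch $\lambda(t)$ is well defined and the sign of $\dot{\lambda}(0)$ genuinely propagates to the sign of the finite shift $\tilde{\lambda}_0 - \lambda_0$. This is precisely where the isolation $\lambda_0 < c$ and the (implicit) smallness of $\Delta V$ enter, since without a smallness hypothesis the first-order sign need not persist all the way to $t = 1$.
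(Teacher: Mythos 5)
The paper offers no proof of this statement at all---Appendix B lists it without proof, citing Kato's analytic perturbation theory (Section VII.4.6)---and your argument is precisely that cited machinery: the type (A) holomorphic family $L(t) = L + t\,\Delta V$ on the fixed domain $H^2(\mathbb{R})$, a Riesz projection isolating the eigenvalue branch, and the Feynman--Hellmann formula $\dot{\lambda}(0) = \langle \Delta V u_0, u_0 \rangle_{L^2}$, together with the correct observation that the stated sign conclusion implicitly requires smallness of $\Delta V$ (exactly how the paper applies it in Lemma 4, where $\Delta V = O(\omega)$). The only point you leave tacit is the simplicity of $\lambda_0$, which underlies your rank-one projection claim; for the scalar operator $-\partial_x^2 + c + V$ on the line this is automatic, since the eigenspace of an isolated eigenvalue is spanned by the unique (up to scalar) solution decaying as $x \to +\infty$.
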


\vspace{0.25cm}

\renewcommand{\theorem}{{\bf Theorem E }}
\begin{theorem}{\cite[Theorem 4.1]{Pel-book}}{\bf .}
Assume that $H$ is a Hilbert space equipped with the inner product $\langle \cdot, \cdot \rangle$.
Fix a vector ${\bf s} \in H$. Assume that $L$ is a self-adjoint operator on $H$ such that the
number of negative eigenvalues of $L$ is $n(L)$, the eigenvectors of $L$
for the zero eigenvalue are orthogonal to ${\bf s}$, and the rest of the spectrum of $L$
is bounded away from zero. Then, the number of negative eigenvalues of $L$ in
the constrained space
$$
H_c := \{ {\bf u} \in H : \quad \langle {\bf s},{\bf u} \rangle = 0 \}
$$
is defined by the sign of
$$
\sigma := \langle L^{-1} {\bf s}, {\bf s} \rangle.
$$
If $\sigma > 0$, then the number of negative eigenvalues of $L$ under the constraint is $n(L)$,
whereas if $\sigma < 0$, then this number is $n(L)-1$.
\end{theorem}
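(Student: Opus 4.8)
The plan is to prove Theorem E by reducing the constrained eigenvalue count to a monotonicity analysis of a single scalar \emph{secular function} built from the resolvent of $L$. First I would note that the hypothesis that the zero eigenvectors of $L$ are orthogonal to ${\bf s}$ guarantees ${\bf s} \in \mathrm{Ran}(L)$, so that $L^{-1}{\bf s}$ is unambiguously defined as the unique preimage lying in $(\ker L)^{\perp}$ and $\sigma = \langle L^{-1}{\bf s}, {\bf s}\rangle$ is a finite number. I would also record the elementary min--max fact that restricting a self-adjoint operator to a subspace of codimension one changes the number of negative eigenvalues by at most one, so that $n(L|_{H_c}) \in \{n(L)-1,\, n(L)\}$; the whole content of the theorem is to decide which of these two values occurs.

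Next I would characterize the eigenvalues of the constrained operator through the Lagrange-multiplier equation $L{\bf u} = \lambda {\bf u} + \alpha {\bf s}$ subject to $\langle {\bf s}, {\bf u}\rangle = 0$. Eigenvectors with $\alpha = 0$ are exactly the eigenvectors of $L$ already lying in $H_c$; these persist with unchanged eigenvalue and, being orthogonal to ${\bf s}$, do not interact with the constraint. For $\alpha \neq 0$ and $\lambda$ outside the spectrum of $L$, I would solve ${\bf u} = \alpha (L-\lambda)^{-1}{\bf s}$ and feed this into the constraint, obtaining the secular equation $f(\lambda) = 0$, where $f(\lambda) := \langle {\bf s}, (L-\lambda)^{-1}{\bf s}\rangle$. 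Thus the ``new'' constrained eigenvalues are precisely the zeros of $f$.

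The heart of the argument is then monotonicity and interlacing. Differentiation gives $f'(\lambda) = \|(L-\lambda)^{-1}{\bf s}\|^2 > 0$, so $f$ is strictly increasing on every interval of its domain; the spectral representation shows that $f$ blows up to $+\infty$ as $\lambda$ approaches an ${\bf s}$-charged eigenvalue from below and to $-\infty$ from above, tends to $0^{+}$ as $\lambda \to -\infty$, and---crucially---is regular at $\lambda = 0$ with $f(0) = \sigma$, because ${\bf s} \perp \ker L$. Hence $f$ has exactly one zero in each gap between consecutive charged eigenvalues, and these zeros interlace with the eigenvalues of $L$. Counting the zeros in $(-\infty,0)$ reduces to the sign of $f$ at the origin: the gap straddling $0$ contains exactly one zero $\lambda^{*}$, and since $f$ increases across it, $\sigma = f(0) > 0$ forces $\lambda^{*} < 0$ (one extra negative constrained eigenvalue, total $n(L)$), whereas $\sigma < 0$ pushes $\lambda^{*}$ onto the positive axis (total $n(L)-1$).

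The main obstacle I anticipate is the bookkeeping with eigenvalues of $L$ orthogonal to ${\bf s}$, with degenerate eigenvalues, and with the edge of the continuous spectrum. The clean way to dispatch this is to split $H = K \oplus K^{\perp}$, where $K := \overline{\mathrm{span}}\{(L-\lambda)^{-1}{\bf s}\}$ is the cyclic subspace generated by ${\bf s}$: on $K^{\perp}$ the constraint is vacuous and $L$ contributes the same negative count to $n(L)$ and to $n(L|_{H_c})$, so those terms cancel, while on $K$ the spectral multiplicity is one, every eigenvalue is charged and simple, and the secular-function analysis applies verbatim. The hypothesis that the rest of the spectrum is bounded away from zero, together with the decay making $(L-\lambda)^{-1}{\bf s}$ well behaved up to the spectral edge, ensures that no spurious zero of $f$ accumulates at the continuous spectrum, so the interlacing count is exact and yields precisely the dichotomy in the statement.
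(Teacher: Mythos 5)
The paper itself contains no proof of Theorem E: Appendix B explicitly lists these results without proofs, citing \cite[Theorem 4.1]{Pel-book}, and your argument is essentially the standard proof found in that reference --- reducing the new constrained eigenvalues to zeros of the secular function $f(\lambda)=\langle {\bf s},(L-\lambda)^{-1}{\bf s}\rangle$, using $f'(\lambda)=\|(L-\lambda)^{-1}{\bf s}\|^2>0$, the pole and interlacing structure on the cyclic subspace generated by ${\bf s}$, and the sign of $f(0)=\sigma$ to decide whether the zero in the gap straddling the origin is negative or positive. The bookkeeping is correct (uncharged negative eigenvalues persist, each of the $k-1$ gaps between negative charged eigenvalues contributes one negative zero, and the final gap contributes one more exactly when $\sigma>0$, giving $n(L)$ versus $n(L)-1$), with the only minor implicit point being that ${\bf s}\in{\rm Ran}(L)$ follows from orthogonality to $\ker L$ \emph{together with} the hypothesis that the nonzero spectrum is bounded away from zero, which you do invoke, so the proposal is sound as written.
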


\vspace{0.25cm}

\renewcommand{\theorem}{{\bf Theorem F }}
\begin{theorem}{\cite[Theorem 4.15]{Pel-book}}{\bf .}
Let $X=H^1(\mathbb{R},\mathbb{C}^4)$ be the energy space for the solution
$\vec{\psi} := (u,v)$ of the massive Thirring model (\ref{MTM}).
Let $\vec{\phi}_{\omega} :=(U_{\omega},\overline{U}_{\omega})$
be a a local non-degenerate minimizer of $R$ in $X$ under the constraints of
fixed $Q$ and $P$ for some $\omega \in (-1,1)$. Then, $\vec{\phi}_{\omega}$ is orbitally
stable in $X$ with respect to the time evolution of the MTM system (\ref{MTM}).
In other words, for any $\epsilon > 0$, there is $\delta > 0$ such that
if the initial datum satisfies
\begin{equation*}
\inf_{\alpha,\beta \in \mathbb{R}} \|\vec{\psi} |_{t = 0} - e^{i \alpha} \vec{\phi}_{\omega}(\cdot+\beta)\|_{X} < \delta,
\end{equation*}
then for all $t > 0$, we have
\begin{equation*}
\inf_{\alpha,\beta \in \mathbb{R}} \|\vec{\psi} - e^{i \alpha} \vec{\phi}_{\omega}(\cdot+\beta)\|_{X} < \epsilon.
\end{equation*}
\end{theorem}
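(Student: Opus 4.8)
The plan is to run the standard Lyapunov--functional argument for orbital stability, taking $\Lambda_{\omega} = R + (1-\omega^2) Q$ as the Lyapunov functional and feeding in the conservation of $R$, $Q$, and $P$ together with the coercivity supplied by Theorem \ref{theorem-minimum}. Since $\vec{\phi}_{\omega}$ is a critical point of $\Lambda_{\omega}$ in the full space $H^1(\R,\C^2)$, the first variation vanishes and a Taylor expansion in the perturbation $\mathbf{w}$ gives
\begin{equation*}
\Lambda_{\omega}(\vec{\phi}_{\omega} + \mathbf{w}) - \Lambda_{\omega}(\vec{\phi}_{\omega}) = \tfrac{1}{2} \langle L \mathbf{w}, \mathbf{w} \rangle + N(\mathbf{w}),
\end{equation*}
where $L$ is the Hessian (\ref{Hessian}) and $N(\mathbf{w}) = O(\|\mathbf{w}\|_{H^1}^3)$ collects the cubic through sextic contributions of $R$, each controlled in $H^1$ by the Gagliardo--Nirenberg inequality (\ref{GN-inequality}) and the Sobolev embedding $H^1 \hookrightarrow L^4$. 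The first step is to upgrade the strict convexity of Theorem \ref{theorem-minimum} to a genuine coercivity estimate: on the orthogonal complement $X_{\omega}$ of the complex-valued constraints (\ref{constraint-1}) and (\ref{constraint-2}), the positivity of $L$ combined with the spectral gap $[1-\omega^2,\infty)$ of the continuous spectrum and the second-order differential structure of $L$ yields $\langle L \mathbf{w}, \mathbf{w}\rangle \geq C \|\mathbf{w}\|_{H^1}^2$ for all $\mathbf{w} \in X_{\omega}$.

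Next I would set up the modulation. For a solution $\vec{\psi}(t)$ remaining in a tubular neighborhood of the orbit, the implicit function theorem produces $C^1$ functions $\alpha(t)$ and $\beta(t)$ such that the remainder $\mathbf{w}(t) := \vec{\psi}(t) - e^{i\alpha(t)} \vec{\phi}_{\omega}(\cdot + \beta(t))$ is orthogonal to the gauge and space-translation zero modes $\mathbf{F}_g$ and $\mathbf{F}_s$; solvability of the defining system follows from the non-degeneracy of the pairings of the orbit tangents $i\vec{\phi}_{\omega}$ and $\vec{\phi}_{\omega}'$ with $\mathbf{F}_g$ and $\mathbf{F}_s$. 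This removes the imaginary part of (\ref{constraint-1}) and the real part of (\ref{constraint-2}), but $\mathbf{w}(t)$ need not satisfy the surviving $Q$ and $P$ parts of the constraints. I would therefore split $\mathbf{w} = \mathbf{w}_0 + a_Q \mathbf{v}_Q + a_P \mathbf{v}_P$ with $\mathbf{w}_0 \in X_{\omega}$, where the coefficients obey $|a_Q| + |a_P| \leq C(|\Delta Q| + |\Delta P| + \|\mathbf{w}\|_{H^1}^2)$ for the conserved mismatches $\Delta Q := Q(\vec{\psi}) - Q(\vec{\phi}_{\omega})$ and $\Delta P := P(\vec{\psi}) - P(\vec{\phi}_{\omega})$. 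Because $Q$ and $P$ are gauge- and translation-invariant and locally Lipschitz on $H^1$, these mismatches are time-independent and bounded by $C\|\mathbf{w}(0)\|_{H^1}$.

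The estimate then closes as follows. Expanding $\langle L \mathbf{w}, \mathbf{w}\rangle$ in the splitting and applying the coercivity bound to $\mathbf{w}_0$ gives
\begin{equation*}
\langle L \mathbf{w}(t), \mathbf{w}(t)\rangle \geq \tfrac{C}{4} \|\mathbf{w}(t)\|_{H^1}^2 - C \left( |\Delta Q|^2 + |\Delta P|^2 \right) - C \|\mathbf{w}(t)\|_{H^1}^4,
\end{equation*}
while conservation of $\Lambda_{\omega}$ and smallness of the initial data yield $\Lambda_{\omega}(\vec{\psi}(t)) - \Lambda_{\omega}(\vec{\phi}_{\omega}) = \Lambda_{\omega}(\vec{\psi}(0)) - \Lambda_{\omega}(\vec{\phi}_{\omega}) = O(\delta^2)$ and hence $\langle L \mathbf{w}(t), \mathbf{w}(t)\rangle \leq C\delta^2 + C\|\mathbf{w}(t)\|_{H^1}^3$. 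Since $|\Delta Q|^2 + |\Delta P|^2 = O(\delta^2)$, a continuity (bootstrap) argument absorbs the cubic and quartic terms for $\delta$ small and produces $\|\mathbf{w}(t)\|_{H^1} \leq C\delta$ for all $t$; this is the asserted orbital stability. Global existence of the $H^1$ solution needed to run the bootstrap is guaranteed by the global well-posedness results of \cite{Candy,ST}.

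The step I expect to be the main obstacle is reconciling the indefiniteness of the full Hessian $L$ with its positivity on the constrained space $X_{\omega}$: one must show that the directions $\mathbf{v}_Q, \mathbf{v}_P$ along which $L$ fails to be positive are dynamically inert, in the precise sense that their amplitudes are slaved to the conserved and initially small mismatches $\Delta Q, \Delta P$ through the splitting above. A secondary, more technical point is the uniform $H^1$ control of the remainder $N(\mathbf{w})$ and of the coefficients $a_Q, a_P$ throughout the continuity argument, together with verifying that the modulation stays well defined as long as $\|\mathbf{w}(t)\|_{H^1}$ remains below the size of the tubular neighborhood.
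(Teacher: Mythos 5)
Your proposal is sound, but it follows a genuinely different route from the paper's proof. You fix the frequency $\omega$, modulate only the gauge and translation parameters, and then slave the components of the perturbation along the $Q$- and $P$-constraint directions to the conserved mismatches $\Delta Q$ and $\Delta P$, closing with coercivity of the Hessian on the full constrained space of Theorem \ref{theorem-minimum} and a bootstrap. The paper instead modulates \emph{three} parameters $(\theta,\beta,\omega)$, letting the soliton frequency itself vary so that the orthogonality $\langle \vec{\phi}_{\omega}, \vec{U}\rangle_{L^2}=0$ holds exactly rather than approximately; the drift of $\omega$ is then controlled by Taylor-expanding the scalar function $G(\omega) = D(\omega)-D(\omega_0)+(\omega^2-\omega_0^2)Q(\vec{\phi}_{\omega_0})$ and completing a square against the conserved quantity $Q(\vec{\psi})-Q(\vec{\phi}_{\omega_0})$, which hinges on the sign condition $G''(\omega_0) = 2\omega_0/\sqrt{1-\omega_0^2} > 0$, valid only for $\omega_0 \in (0,1)$; for $\omega_0 \in (-1,0)$ the paper therefore switches to the augmented functional $R + (1-\omega_0^2)Q + cP$ and modulates the Lorentz velocity $c$ near $c_0 = 0$. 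The trade-off is instructive: your fixed-$\omega$ slaving argument needs positivity of $L$ under \emph{both} the $Q$- and $P$-constraints, but it is uniform in the sign of $\omega_0$ and avoids both the case split and the momentum device; the paper's moving-parameter argument imposes only three orthogonality conditions at a time (for $\omega_0 \in (0,1)$ the $P$-constraint is never invoked, consistent with $L_-$ having no negative eigenvalue there), at the cost of the nondegeneracy $G''(\omega_0) \neq 0$ and the separate treatment of negative $\omega_0$. The residual points you flag yourself --- invertibility of the Gram matrix in the splitting $\mathbf{w} = \mathbf{w}_0 + a_Q \mathbf{v}_Q + a_P \mathbf{v}_P$ compatibly with the modulation orthogonality, and uniform control of the remainder and the modulation parameters throughout the continuity argument --- are routine and are the exact analogues of the verifications the paper performs through its explicit $3\times 3$ Jacobian in the implicit-function step, so neither constitutes a gap.
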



\begin{thebibliography}{10}

\bibitem{AM} M.A. Alejo and C. Munoz, ``Nonlinear stability of MKdV breathers",
arXiv:1206.3157 (2012).

\bibitem{BarPel} I.V. Barashenkov, D.E. Pelinovsky, and E.V.
Zemlyanaya, "Vibrations and oscillatory instabilities of gap
solitons", Phys. Rev. Lett. {\bf 80}, 5117--5120 (1998)

\bibitem{Comech} G. Berkolaiko and A. Comech, ``On spectral stability of solitary waves
of nonlinear Dirac equations in 1D", Math. Model. Nat. Phenom. {\bf 7} (2012), no. 2, 13--31.


\bibitem{Boussaid}  N. Boussaid, ``Stable directions for small nonlinear Dirac standing waves,"
{\em  Comm. Math. Phys.} {\bf  268} (2006),  no. 3, 757--817.

\bibitem{Boussaid2}  N. Boussaid, ``On the asymptotic stability of small nonlinear Dirac standing
waves in a resonant case," {\em SIAM J. Math. Anal.} {\bf  40} (2008),  no. 4, 1621--1670.

\bibitem{COM1} N. Boussaid and A. Comech, ``On spectral stability of the nonlinear Dirac equation",
preprint, arXiv:1211.3336.

\bibitem{BC12}  N. Boussaid and S. Cuccagna, ``On stability of standing waves of nonlinear Dirac
equations," {\em Comm. PDEs} {\bf 37} (2012), no. 6, 1001--1056.

\bibitem{Candy} T. Candy, ``Global existence for an $L^2$-critical nonlinear Dirac equation
in one dimension", Adv. Diff. Eqs. {\bf 7-8} (2011), 643--666.

\bibitem{ChPel-cme} M. Chugunova, D. Pelinovsky,  ``Block-diagonalization of the
symmetric first-order coupled-mode system," {\em SIAM J. Appl. Dyn. Syst.} {\bf 5} (2006), 66--83.

\bibitem{Comech-alone1} A. Comech, ``On the meaning of the Vakhitov--Kolokolov stability
criterion for the nonlinear Dirac equation", preprint, arXiv:1107/1763.

\bibitem{Comech-alone2} A. Comech, ``Linear instability of nonlinear Dirac equation in 1D with higher
order nonlinearity", preprint, arXiv:1203.3859.


\bibitem{COM2} A. Comech, M. Guan, and S. Gustafson, ``On linear instability of solitary waves for the
nonlinear Dirac equation", preprint, arXiv:1209.1146.

\bibitem{DecKap} B. Deconinck and T. Kapitula, ``The orbital stbaility of the cnoidal waves of the
Korteweg--de Vries equation", Phys. Lett. A {\bf 374} (2010), 4018--4022.

\bibitem{DP} P. Deift and J. Park, ``Long-time asymptotics for solutions
of the NLS equation with a delta potential and even initial data",
Int. Math. Res. Not. {\bf  2011} (2011), 5505--5624.

\bibitem{GD} G. Derks and G.A. Gottwald, "A robust numerical
method to study oscillatory instability of gap solitary waves", SIAM
J. Appl. Dyn. Syst. {\bf 4}, 140--158 (2005)



\bibitem{GWH} R.H. Goodman, M.I. Weinstein, and P.J. Holmes, ``Nonlinear propagation
of light in one-dimensional periodic structures'', J. Nonlinear.
Science {\bf 11} (2001), 123--168.

\bibitem{GSS} M. Grillakis, J. Shatah, and W. Strauss,
``Stability theory of solitary waves in the presence of symmetry'',
J. Funct. Anal. {\bf 74} (1987), 160--197; J. Funct. Anal. {\bf 94} (1990), 308--348.


\bibitem{HN1} N. Hayashi and P.I. Naumkin, ``The initial value problem for the cubic
nonlinear Klein--Gordon equation", Z. angew. Math. Phys. {\bf 59} (2008), 1002--1028.

\bibitem{HN2} N. Hayashi and P.I. Naumkin, ``Final state problem for the cubic
nonlinear Klein--Gordon equation", J. Math. Phys. {\bf 50} (2009), 103511.

\bibitem{Perelman} J. Holmer, G. Perelman, and M. Zworski,
``Effective dynamics of double solitons for perturbed mKdV",
Comm. Math. Phys. {\bf 305} (2011), 363--425.

\bibitem{KapSand} T. Kapitula and B. Sandstede, "Edge bifurcations for
near integrable systems via Evans function techniques", SIAM J.
Math. Anal. {\bf 33}, 1117--1143 (2002)

\bibitem{Kato} T. Kato, {\em Perturbation theory for linear
operators} (Springer-Verlag, Berlin, 1995)

\bibitem{KL1} D.J. Kaup and T.I. Lakoba, "The squared
eigenfunctions of the massive Thirring model in laboratory
coordinates", J. Math. Phys. {\bf 37}, 308--323 (1996)

\bibitem{KL2} D.J. Kaup and T.I. Lakoba, "Variational method: How
it can generate false instabilities", J. Math. Phys. {\bf 37},
3442--3462 (1996)

\bibitem{KN} D.J. Kaup and A.C. Newell, "On the Coleman
correspondence and the solution of the Massive Thirring model",
Lett. Nuovo Cimento {\bf 20}, 325--331 (1977).


\bibitem{Komech} A. Komech and A. Komech, ``Global attraction to solitary waves for
a nonlinear Dirac equation with mean field interaction",  SIAM J. Math. Anal. {\bf 42} (2010), 2944-2964.

\bibitem{Kopylova} E.A. Kopylova, ``Weighted energy decay for 1D Dirac equation",
Dynamics of PDE {\bf 8} (2011), no. 2, 113--125.

\bibitem{KM} E.A. Kuznetzov and A.V. Mikhailov, ``On the complete integrability
of the two-dimensional classical Thirring model",
Theor. Math. Phys. {\bf 30} (1977), 193--200.

\bibitem{Machihara} S. Machihara, M. Nakamura, K. Nakanishi, T. Ozawa,  ``Endpoint Strichartz estimates and
global solutions for the nonlinear Dirac equation", {\em J. Funct. Anal.} {\bf 219} (2005), 1--20.

\bibitem{Machihara2} S. Machihara, K. Nakanishi, T. Ozawa,
``Small global solutions and the nonrelativistic limit for the
nonlinear Dirac equation", {\em Rev. Math. Iberoam.} {\bf 19}
(2003), 179--194.

\bibitem{Machihara3} S. Machihara, K. Nakanishi, and K. Tsugawa,
``Well-posedness for nonlinear Dirac equations in one dimension",
Kyoto J. Math. {\bf 50} (2010), 403--451.

\bibitem{MS} J.H. Maddocks and R.L. Sachs, ``On the stability of KdV multi-solitons",
Comm. Pure Appl. Math. {\bf 46} (1993), 867--901.

\bibitem{MizPel} T. Mizumachi and D.Pelinovsky, ``B\"{a}cklund transformation
and $L^2$--stability of NLS solitons", Int. Math. Res. Not. {\bf 2012} (2012), 2034--2067.

\bibitem{Pel-survey} D.E. Pelinovsky, ``Survey on global existence in the nonlinear Dirac equations in one dimension",
in {\em Harmonic Analysis and Nonlinear Partial Differential Equations}
(Editors: T. Ozawa and M. Sugimoto) RIMS Kokyuroku Bessatsu {\bf B26} (2011),
37--50.

\bibitem{Pel-book} D.E. Pelinovsky,
{\em Localization in periodic potentials: from Schr\"{o}dinger operators to the Gross--Pitaevskii
equation} (Cambridge University Press, Cambridge, 2011).

\bibitem{PS} D.E. Pelinovsky and A. Stefanov, ``Asymptotic stability of
small gap solitons in nonlinear Dirac equations'', J. Math. Phys. {\bf 53} (2010), 073705 (27 pages).


\bibitem{ST} S. Selberg and A. Tesfahun, ``Low regularity well-posedness for some nonlinear Dirac
equations in one space dimension", Diff. Integral Eqs. {\bf 23} (2010), 265--278.

\bibitem{Thirring} W. Thirring, ``A soluble relativistic field theory",
Annals of Physics {\bf 3} (1958), 91-–112.
\end{thebibliography}
\end{document}